\documentclass[a4paper,11pt]{amsart}
\usepackage[utf8]{inputenc}
\usepackage{cite}
\usepackage{url}
\usepackage{xcolor}
\usepackage{bbm}
\usepackage[foot]{amsaddr}
\newtheorem{theorem}{Theorem}[section]
\newtheorem{lemma}[theorem]{Lemma}
\newtheorem{definition}[theorem]{Definition}
\newtheorem{assumption}{Assumption}
\newtheorem{proposition}[theorem]{Proposition}
\newtheorem{remark}[theorem]{Remark}
\newtheorem{corollary}[theorem]{Corollary}
\newtheorem{example}[theorem]{Example}

\allowdisplaybreaks[4]
\usepackage{enumerate}

\numberwithin{equation}{section}
\usepackage{hyperref}


\newcommand{\Bcal} {{\mathcal B}}

\newcommand{\Fcal} {{\mathcal F}}

\newcommand{\Mcal} {{\mathcal M}}

\newcommand{\Mfrak} {{\mathfrak M}}

\newcommand{\R}{\mathbb{R}}
\newcommand{\N}{\mathbb{N}}

\renewcommand{\P}{\mathbb{P}}
\newcommand{\E}{\mathbb{E}}

\renewcommand{\epsilon}{\varepsilon}
\newcommand{\eps}{\varepsilon}

\newcommand{\Lip}{\operatorname{Lip}}
\newcommand{\loc}{\operatorname{loc}}

\newcommand{\ud}{\mathrm{d}}

\usepackage{geometry}
\usepackage{enumerate}
\usepackage{graphicx}
\usepackage{tikz}
\usepackage{atbegshi}

\AtBeginShipoutFirst{%
    \begin{tikzpicture}[remember picture, overlay]
        \node[anchor=north west, xshift=1in, yshift=-1.5cm] at (current page.north west) {%
            \includegraphics[width=2cm]{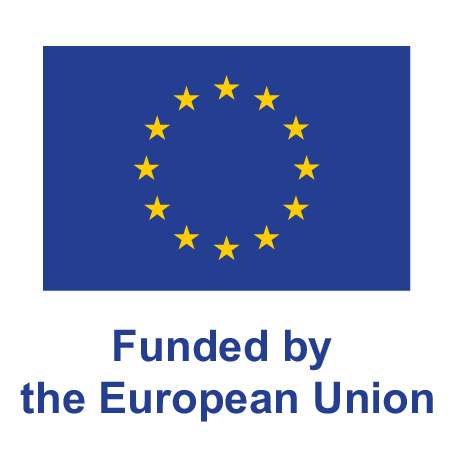} 
        };
    \end{tikzpicture}%
}

\title[Ergodicity for stochastic neural field equations]{Ergodicity for stochastic neural field equations}
\author[A.-M. Otestova]{Anna-Mariya Otsetova}
\email{anna-mariya.otsetova@aalto.fi}
\author[J. M. T\"olle]{Jonas M. T\"olle}
\email{jonas.tolle@aalto.fi}
\address[AMO, JMT]{Aalto University, Department of Mathematics and Systems Analysis, PO Box 11100 (Otakaari 1, Espoo), 00076 Aalto, Finland}

\date{\today}

\keywords{stochastic Amari neural field model; stochastic equations in Hilbert space; ergodic Feller semigroup; exponential ergodicity; exponential mixing; existence and uniqueness of invariant measures}
\subjclass{35K58; 35R60; 37A25; 37L40; 45K05; 47D07; 60H20; 92B20}

\begin{document}

\begin{abstract}
We investigate the well-posedness and long-time behavior of a general continuum neural field model with Gaussian noise on possibly unbounded domains. In particular, we give conditions for the existence of invariant probability measures by restricting the solution flow to an invariant subspace with a nonlocal metric. Under the assumption of a sufficiently large decay parameter relative to the noise intensity, the growth of the connectivity kernel, and the Lipschitz regularity of the activation function, we establish exponential ergodicity and exponential mixing of the associated Markovian Feller semigroup and the uniqueness of the invariant measure with second moments.
\end{abstract}

\maketitle

{\footnotesize
\tableofcontents
}

\section{Introduction}

Mathematical neuroscience, that is, mathematical modeling of neurobiological processes on different scales, has gained a lot of attention in the recent decades, notably because of its striking analogy to artificial neural networks, prominent in machine learning.
In 1977, Amari \cite{Amari} proposed a general continuum neural field model which already features many of the observed dynamic pattern-formation effects\footnote{Pattern-formation is classically linked to reaction-diffusion systems, first studied by Turing in \cite{T:52}.} in neuroscience, see \cite{neuralfields2014ch9} for an overview on neural field models. The nonlinear \emph{Amari neural field equation} models the voltage $v$ in a continuum mean-field model of the neural cortex $U\subset\mathbb{R}^d$,
\begin{equation}\partial_t v=-\alpha v+K f(v),\end{equation}
where $\alpha>0$ is a decay parameter, $w:U\times U\to\R$ models the connectivity of neurons via the kernel operator,
\[(K u)(x):=\int_U w(x,y)u(y)\,dy,\quad x\in U,\]
and $f:\R\to\R$ is an neural activation function, typically as follows.
\begin{example}\label{ex:activation}
Commonly used activation functions include:
\begin{description} 
\item[ReLU] $f(x):=x\vee 0$;
\item[Heaviside] $f(x)=\mathbbm{1}_{[0,\infty)}(x)$;
\item[Logistic] $f(x)=(1+e^{-x})^{-1}$;
\item[Hyperbolic tangent] $f(x)=\tanh(x)$.
\end{description}
\end{example}

Several stochastic generalizations have emerged since the seminal contributions of Amari \cite{Amari} and Wilson and Cowan \cite{WilsonCowan}.
The \emph{stochastic neural field model} can be derived from scaling arguments in different ways.
On the one hand, it can be obtained heuristically by interpreting the (small) Gaussian noise as quantum or thermal fluctuations, or measurement errors. For an overview and heuristic derivation of the Amari model from microscopical neurobiological models, we refer to the comprehensive articles \cite{Bressloff} and \cite{CR:25}, and to the book \cite{neuralfields2014ch9}. On the other hand, stochastic continuum models emerge naturally as scaling limits of finite dimensional models for interacting neurons subject to spiking, which is modeled as random pure jump noise.

As an motivational example, we consider the model from \cite{ELL:21,ELL:23},
where a system of $N$ interacting neurons represented by their membrane potentials, and structured within $P$ populations is considered. For any $ 1 \le k \le P$, let $ X_t^{N, k, i }, 1 \le i \le N_k $, be the $N_k$ membrane potential processes representing the neurons within the $k$th population, respectively, where we have that $\sum_{k=1}^P N_k=N$. The system evolves according to the following pure jump stochastic dynamics for a finite time horizon $T>0$.
\begin{equation}\label{eq:fini}
d X_t^{N, k, i } = - \alpha X_t^{N, k, i } \,dt + \sum_{l=1}^P \tilde{w}(k, l ) \frac{1}{N_l} \sum_{j=1}^{N_l} \int_{0}^\infty 1_{\{ z \le f ( X_{t-}^{N, l , j } ) \}} \,\pi^{l, j } ( dt, dz ) ,\quad t\in (0,T],
\end{equation} 
where $ X^{N, k, i}_0, 1 \le i \le N_k $ are independent and identically distributed (i.i.d.) according to some initial probability measure $X_0^{N,k,i}\sim \nu_0^k $, and where $ \pi^{l, j} , 1  \le P, 1 \le j \le N_l $, are i.i.d. Poisson random measures on $ [0,\infty)\times [0,\infty) $ having Lebesgue intensity. 

The authors obtain the following, more specific continuum model on $U=[0,1]$,
\begin{equation}\label{eq:specialmodel}d u(x,t) = - \alpha u(x,t) \,dt + \int_U w( x, y) f( u(x,t) ) \,dy \,dt + \frac{1}{\sqrt{N}} \int_U w(x,y) \sqrt{ f ( u(y,t) ) }\, W (dt, dy) ,\end{equation}
where $W$ is space-time white noise on $[0,T]\times [0,1]$, mollified in the space component by the interaction kernel.
The convergence to this model is obtained via Koml\'os-Major-Tusn\'ady (KMT) coupling \cite{KMT} by a space-time rescaling of the finite dimensional pure jump stochastic differential equations \eqref{eq:fini},
where each population corresponds to the box $U=[0,1]$ such that $ \tilde{w} (k, l ) = w( \frac{k}{P}, \frac{l}{P} )$, for any $ 1 \le k , l \le P $.

For specific $f$, e.g. if $f$ is the logistic function, we obtain exponential ergodicity for \eqref{eq:specialmodel} under certain assumptions on the kernel, $f$ and $\alpha$.
See \cite{CMT:21,CCDRB:15,CDLO:19,CO:20} for related approaches via Hawkes processes. Ergodicity for the finite dimensional system has been obtained in \cite{DLL:20}.

We choose to work with the more general model on $U\subset\R^d$, being the closure of a nonempty, possibly unbounded open domain in $\R^d$, $T>0$ being a finite time horizon. For a decay parameter $\alpha>0$, a connectivity kernel $w:U\times U\to\R$, and a nonlinear activation function $f:\R\to\R$, we study the following general stochastic integral equation with Gaussian Wiener noise
\begin{align}\label{eq:gen_model}
\begin{split}
du(\cdot,t) = - \alpha u(\cdot,t) \,dt +& \int_U w( \cdot, y) f( u(y,t) )\, dy\, dt 
+ B (u(\cdot,t))\, dW(t) ,\quad t\in (0,T],\\
&u(\cdot,0)=u_0(\cdot)\in L^2(U,\rho).
\end{split}
\end{align}
Here, $\rho\ge 0$ $dx$-a.e., $\rho\in L^1_{\text{loc}}(U)$ is a weight function and we set $\nu(dx):=\rho(x)\, dx$. Furthermore, $\{W(t)\}_{t\ge 0}$ denotes a cylindrical Wiener process\footnote{See \cite{DaPrZa:2nd,LR:15} for the notion of a cylindrical Wiener process. A very brief explanation of this notion in $L^2$-spaces is provided in \cite[Appendix A]{KuehnToelle19}} in a separable Hilbert space $V$, modeled on a filtered probability space $(\Omega,\Fcal,\{\Fcal\}_{t\ge 0},\P)$ satisfying the standard conditions. The noise coefficient $B:L^2(U,\rho)\to L_2(V,L^2(U,\rho))$ takes values in a space of Hilbert-Schmidt operators.

For simplicity, we denote $u(t):=u(\cdot,t)\in L^2(U,\rho)$. We may also denote $u(t,u_0):=u(\cdot,t,u_0):=u(t)$, whenever $u(0)=u_0\in L^2(U,\rho)$. Denote by $\mu_{u}(t,u_0)$ the law of $u(t,u_0)$, which is seen to be a probability measure on the Borel sets of $L^2(U,\rho)$.

Stochastic neural field models feature many interesting phenomena, as e.g. 
traveling wave solutions \cite{IM:16,K:20},
traveling bumps and wandering patterns \cite{PollKilpatrick,KilpatrickErmentrout,BK:15}, and front propagation \cite{KruegerStannat}.
Stability of neural field equations have been studied in \cite{VeltzFaugeras2010,CRS:25}. The singular case of a Heaviside activation function has been studied in \cite{Kruger:2017eb}.

The non-negative definiteness, or non-positive definiteness, respectively, of the kernel $w$ (see Assumption \ref{assu:hat}) can be interpreted as domination of excitation, or inhibition effects in the neural neural field, respectively, as observed in perceptual bi-stability within binocular rivalry, see \cite{vanEe,WebberBressloff,Moreno-BoteRinzelRubin}. Our main ergodicity result is obtained in either of these cases, however, the mixed case remains an open problem.
In the case of inhibition and a monotone activation function, we obtain an improved ergodicity result in Subsection \ref{subsec:4.4}.

We obtain the following main result.
\begin{theorem}\label{thm:main}\phantom{ }
\begin{enumerate}
\item
Under Assumptions \ref{assu:space} and \ref{assu:noise}, for any $p\ge 2$, any $T>0$, and for any initial datum $u_0\in L^p(\Omega,\mathcal{F}_0,\P;L^2(U,\rho))$, there exists a unique $\P$-a.s. continuous and $\{\mathcal{F}_t\}_{t\in [0,T]}$-adapted
$L^2(U,\rho)$-valued strong solution $u(t)$, $t\in [0,T]$ of \eqref{eq:gen_model}, and there exists a constant $C_{p,T}>0$, such that
\begin{align*}
\mathbb{E}(\|u(t)\|^p_{L^2(U,\rho)})\leq C_{p,T}\left(1+\mathbb{E}\|u_0\|^p_{L^2(U,\rho)}\right),\quad t\in[0,T].
\end{align*} 
\item
Under Assumptions \ref{assu:space}, \ref{assu:noise}, \ref{assu:hat}, and \ref{assu:inv}, there exists a non-trivial compactly embedded linear subspace $H_1\subset L^2(U,\rho)$, such that for every initial datum $v\in H_1$, there exists a sequence of real numbers $T_n\ge 1$, $n\in\N$ with $T_n\to\infty$ as $n\to\infty$, and a probability measure $\mu^v$ on the Borel sets of $(L^2(U,\rho))$ which is invariant for the semigroup $\{P_t\}_{t\ge 0}$, and the following convergence holds
\[\frac{1}{T_n}\int_0^{T_n}\mu_{u}(t,v)\,dt\rightharpoonup \mu^v\quad\text{as}\;\;n\to\infty,\]
in the sense of weak convergence of probability measures.
\item Under Assumptions \ref{assu:space}, \ref{assu:noise}, \ref{assu:hat}, \ref{assu:inv}, and \ref{assu:ergo}, there exists a unique probability measure $\mu$ on the Borel sets of $(L^2(U,\rho))$ with second moments which is invariant for the exponentially ergodic and exponentially mixing semigroup $\{P_t\}_{t\ge 0}$, and, in particular,
\[\mu_{u}(t,v)\rightharpoonup \mu\quad\text{as}\;\;t\to\infty,\]
for every $v\in L^2(U,\rho)$ in the sense of weak convergence of probability measures.
In particular, there exist constants $\hat{C}\ge 0$ and $\lambda\in (0,2\alpha)$, such that for any $v\in H$, and any $t\ge 0$,
\[\left|\E[\varphi(u(t,v))] - \int_{L^2(U,\rho)}\varphi(z)\, \mu(dz) \right|\le 2\Lip(\varphi)\left(\|v\|^2_{L^2(U,\rho)}+\hat{C}\right)e^{-(2\alpha-\lambda)t},\]
for any bounded and Lipschitz continuous function $\varphi:L^2(U,\rho)\to\R$.
\end{enumerate}
\end{theorem}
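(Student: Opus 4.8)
The plan is to prove the three assertions in order. Claim~(i) is a standard globally Lipschitz SPDE argument; claim~(ii) is a Krylov--Bogoliubov argument carried out on a compactly embedded invariant subspace; and claim~(iii) is a Wasserstein contraction/coupling argument. The substance is concentrated in~(ii).

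For~(i) I would read \eqref{eq:gen_model} as the evolution equation $du(t)=\bigl(-\alpha u(t)+Kf(u(t))\bigr)\,dt+B(u(t))\,dW(t)$ in $L^2(U,\rho)$. Since $-\alpha I$ is bounded and everywhere defined, mild and strong solutions coincide and no analytic-semigroup theory is needed. Under Assumption~\ref{assu:space} the kernel operator $K$ is bounded on $L^2(U,\rho)$, so $v\mapsto Kf(v)$ is globally Lipschitz because $f$ is, while Assumption~\ref{assu:noise} makes $B$ globally Lipschitz into $L_2(V,L^2(U,\rho))$ with at most linear growth. A Banach fixed point argument in $L^p(\Omega;C([0,T];L^2(U,\rho)))$, with the Burkholder--Davis--Gundy inequality handling the stochastic convolution, yields the unique $\P$-a.s.\ continuous adapted strong solution; It\^o's formula applied to $\|u(t)\|^p_{L^2(U,\rho)}$, Young's inequality on the drift, Burkholder--Davis--Gundy on the martingale term, and Gronwall's lemma then give the moment bound with a constant $C_{p,T}$ depending only on $p,T,\alpha,\|K\|,\Lip(f),f(0)$ and the growth constant of $B$. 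The same estimate for the difference of two solutions shows $v\mapsto u(t,v)$ is continuous in probability, so $P_t\varphi(v):=\E[\varphi(u(t,v))]$ defines a Feller semigroup.

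For~(ii) I would bring in Assumptions~\ref{assu:hat} and~\ref{assu:inv}. Assumption~\ref{assu:inv} furnishes the nontrivial linear subspace $H_1\subset L^2(U,\rho)$, endowed with a nonlocal metric making $H_1\hookrightarrow L^2(U,\rho)$ compact and with $B$ acting appropriately on $H_1$; one first checks that the solution flow leaves $H_1$ invariant. Running the energy estimate in the $H_1$-metric rather than in $L^2(U,\rho)$ and using the sign-definiteness of the connectivity kernel (Assumption~\ref{assu:hat}) to extract the required one-sided bound on the a priori sign-indefinite kernel term, I would obtain a uniform-in-time bound $\sup_{t\ge 1}\E[\sfd_{H_1}(u(t,v),0)^2]<\infty$ for $v\in H_1$. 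Compactness of the embedding together with Chebyshev's inequality then makes the time averages $R^v_T:=\tfrac1T\int_0^T\mu_u(t,v)\,dt$ tight on $L^2(U,\rho)$, Prokhorov's theorem extracts $T_n\to\infty$ with $R^v_{T_n}\rightharpoonup\mu^v$, and the Krylov--Bogoliubov computation---legitimate thanks to the Feller property from~(i)---shows $\mu^v$ is invariant for $\{P_t\}_{t\ge 0}$. I expect this step to be the main obstacle: since $U$ may be unbounded and the dynamics carry no regularizing second-order term, bounded subsets of $L^2(U,\rho)$ are very far from precompact, so Krylov--Bogoliubov cannot be applied directly; the real work is to construct $H_1$ and its nonlocal metric---squeezing just enough compactness out of the decay and smoothing of $K$, the weight $\rho$ and the structure of $B$ (the content of Assumption~\ref{assu:inv})---and to push the $H_1$-energy estimate through, the latter being where the sign hypothesis of Assumption~\ref{assu:hat} is indispensable.

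For~(iii), with Assumption~\ref{assu:ergo} additionally in force---quantitatively requiring $\alpha$ to dominate a combination of $\Lip(f)$, $\|K\|$ and the noise intensity---I would couple two solutions $u(\cdot,v),u(\cdot,w)$ driven by the \emph{same} Wiener process and apply It\^o's formula to $\|z(t)\|^2_{L^2(U,\rho)}$, $z:=u(\cdot,v)-u(\cdot,w)$, obtaining after taking expectations
\[
\frac{d}{dt}\E\|z(t)\|^2_{L^2(U,\rho)}\le\bigl(-2\alpha+2\Lip(f)\|K\|+\Lip(B)^2\bigr)\,\E\|z(t)\|^2_{L^2(U,\rho)},
\]
where the cross term $2\langle z(t),K(f(u(t,v))-f(u(t,w)))\rangle_{L^2(U,\rho)}$ is estimated by $2\Lip(f)\|K\|\,\|z(t)\|^2_{L^2(U,\rho)}$ and the It\^o correction by $\Lip(B)^2\|z(t)\|^2_{L^2(U,\rho)}$. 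Writing $\kappa:=2\Lip(f)\|K\|+\Lip(B)^2<2\alpha$ and $\lambda:=\alpha+\kappa/2\in(0,2\alpha)$, Gronwall's lemma gives the contraction $\W(\mu_u(t,v),\mu_u(t,w))\le e^{-(2\alpha-\lambda)t}\|v-w\|_{L^2(U,\rho)}$ for all $t\ge 0$. The same computation for $\|u(t,v)\|^2$ alone (Assumption~\ref{assu:ergo} being chosen strong enough) yields $\sup_{t\ge 0}\E\|u(t,v)\|^2_{L^2(U,\rho)}\le\|v\|^2_{L^2(U,\rho)}+\hat C$ with $\hat C$ independent of $v,t$; combining this with the contraction shows $\{\mu_u(t,v)\}_{t\ge 0}$ is $\W$-Cauchy as $t\to\infty$, hence $\W$-convergent to a probability measure $\mu$ with finite second moment, which is invariant (by $\W$-continuity of $P_t^\ast$) and, applying the contraction to a pair of second-moment invariant measures, is the unique such invariant measure, necessarily equal to $\mu^v$. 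Finally, for bounded Lipschitz $\varphi$, coupling $u(t,v)$ with $u(t,Z)$ for an independent $Z\sim\mu$ and using Kantorovich--Rubinstein duality,
\[
\Bigl|\E[\varphi(u(t,v))]-\int_{L^2(U,\rho)}\varphi\,d\mu\Bigr|\le\Lip(\varphi)\,\E\|u(t,v)-u(t,Z)\|_{L^2(U,\rho)}\le 2\Lip(\varphi)\bigl(\|v\|^2_{L^2(U,\rho)}+\hat C\bigr)e^{-(2\alpha-\lambda)t},
\]
after estimating $\E\|v-Z\|_{L^2(U,\rho)}\le\|v\|_{L^2(U,\rho)}+\hat C^{1/2}\le 2(\|v\|^2_{L^2(U,\rho)}+\hat C)$; weak convergence $\mu_u(t,v)\rightharpoonup\mu$ for arbitrary $v\in L^2(U,\rho)$ then follows because bounded Lipschitz functions are convergence-determining, and the displayed estimate is exactly the asserted exponential ergodicity and mixing of $\{P_t\}_{t\ge 0}$.
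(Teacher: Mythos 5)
Your proposal is correct in substance and, for parts (i) and (ii), takes essentially the paper's route: part (i) is the standard well-posedness theory for globally Lipschitz semilinear SPDEs (the paper verifies the Lipschitz/growth hypotheses on $KF$ and $S(t)B$ and cites the corresponding fixed-point theorem of Da Prato--Zabczyk, which is what your sketch reproduces), and part (ii) is exactly the paper's plan of an It\^o energy estimate in the nonlocal norm of a compactly embedded subspace $H_1$, tightness of the time averages, and Krylov--Bogoliubov. Two attributions in (ii) are slightly off: the subspace $H_1=(\operatorname{ker}(\pm\hat K))^{\perp}$ with norm $\|v\|_1=\|(\pm\hat K)^{-1/2}v\|$ is built from Assumption \ref{assu:hat} (sign-definiteness of the symmetric part is what makes the square root of the pseudoinverse exist at all), while Assumption \ref{assu:inv} only supplies the compatibility of $B$ and of the antisymmetric part $\check K$ with that norm; and the sign-definiteness is \emph{not} used to produce a one-sided bound on the drift term --- the paper bounds $\langle KF(u),u\rangle_1$ in absolute value via Cauchy--Schwarz and $\Lip(F)=\Lip(f)$, reserving the monotone/one-sided argument for the separate case of Subsection \ref{subsec:4.4}. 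Also, the paper derives tightness from the time-averaged bound $\tfrac{1}{t}\,\E\int_0^t\|u(s,v)\|_1^2\,ds\le(\|v\|_1^2+\eta)/\gamma$ rather than your pointwise-in-time supremum; both follow from the same differential inequality since $2\alpha-\beta>0$, so this is a harmless variant. Part (iii) is where you genuinely diverge: you obtain existence and uniqueness simultaneously from a $\W$-contraction plus a Cauchy-in-$\W$ argument, which makes (iii) logically independent of the Krylov--Bogoliubov measure and of compactness, whereas the paper takes the invariant measure from (ii), proves its second moments separately (Lemma \ref{lem:2nd}), and concludes by integrating the same synchronous-coupling estimate against $\mu$; the core It\^o computation is identical, and your organization is arguably cleaner. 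Two bookkeeping remarks, neither fatal: your It\^o correction carries the (correct) constant $\Lip(B)^2$, so your contraction condition reads $2\|K\|_{L(H)}\Lip(f)+C_B^2<2\alpha$, which matches Assumption \ref{assu:ergo} only for $C_B\le 1$ --- a discrepancy inherited from the paper, whose own proof writes $C_B$ where $C_B^2$ belongs; and your rate, taken honestly through the square root of the second moment, is $(2\alpha-\lambda)/2$ rather than the stated $2\alpha-\lambda$, the paper reaching the latter by bounding $|P_t\varphi(v)-P_t\varphi(z)|$ directly by $\Lip(\varphi)$ times the squared expected distance. In either case one still obtains some $\lambda\in(0,2\alpha)$ and $\hat C\ge 0$ for which the displayed exponential estimate holds, which is all the theorem asserts.
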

Theorem \ref{thm:main} follows from Theorem \ref{thm:existence_all_cases}, Theorem \ref{thm:existence}, and Theorem \ref{thm:uniqueness} below.
Note that part (i) of Theorem \ref{thm:main} has already, up to minor improvements, essentially been proved in \cite{KuehnRiedler,FaugerasInglis}. We are including it here to make our presentation more concise. We would like to point out that our situation is slightly more general, as it includes both the additive and the multiplicative noise cases both on a bounded domain, as well as on a possibly unbounded domain with in a weighted space. More importantly, we do not assume that the weight $w$ is symmetric. The main contribution of this work is the existence a unique invariant measure and the exponential ergodicity under the conditions discussed below. Our main method relies on the invariance of a compactly embedded nonlocal Hilbert subspace, based on the nonlocal transformation of the ambient space introduced by Kuehn and the second author in \cite{KuehnToelle19}, and the Krylov-Bogoliubov method \cite{DPZ:96}. Uniqueness and exponential ergodicity is then obtained by similar ideas as in
\cite{ESSTvG,BDP}. The main Assumption \ref{assu:ergo} (see Subsection \ref{subsec:4.3})) for the uniqueness of invariant measures is the following quantitative relation between the kernel $w$, the kernel operator $K$, respectively, the activation function $f$, the noise coefficient $B$, and the decay rate $\alpha>0$.
\[\sqrt{2}\|K\|_{L(L^2(U,\rho),L^2(U,\rho))}\Lip(f)+\Lip(B)<\alpha,\]
where the prefactor of $\sqrt{2}$ is chosen for simplicity, and could be improved to any real number strictly larger than $1$, where one has to pay a price in regarding the other Assumption \ref{assu:inv} in Subsection \ref{subsec:4.1}. The condition resembles the assumptions for the exponential frame in \cite{DLL:20,ESSTvG,BDP} and displays the natural hierarchy of parameters of the model, see also \cite{kulik2017ergodic}.

It would be interesting to obtain similar results, when the kernel is neither non-negative or non-positive definite, relating to mixed inhibition and excitation. Another interesting open problem is the analysis of metastability and Kramers' law \cite{BerglundGentz10} which could be based on the gradient structure introduced in \cite{KuehnToelle19}.

\subsection*{Notation}
For a \emph{Polish space} $X$, i.e., a separable and completely metrizable topological space, denote by $\mathcal{B}(X)$ the Borel $\sigma$-algebra of $X$, that is, the smallest $\sigma$-algebra containing all open subsets of $X$. Let $\mathcal{M}_1(X)$ denote the set of all probability measures on $\mathcal{B}(X)$ (called \emph{Borel probability measures}). If $X$ is a normed space with norm $\|\cdot\|_X$, for $p\ge 1$, let $\mathcal{M}_1^p(X)$ denote the set of all Borel probability measures with \emph{finite $p$-moment}, that is, all $\mu\in\mathcal{M}_1(X)$ such that $\int_X \|z\|^p_X\,\mu(dz)<\infty$. Denote by $\mathcal{B}_b(X)$ the space of bounded Borel measurable functions from $X$ to $\R$. Denote by $C_b(X)$ the space of all continuous and bounded functions from $X$ to $\R$, equipped with the supremum norm $\|\varphi\|_\infty:=\sup_{x\in X}|\varphi(x)|$. and denote by $\Lip_b(X)$ the space of bounded and Lipschitz continuous functions from $X$ to $\R$ with norm
$\|\varphi\|_{\Lip_b} :=  \Lip(\varphi) + \|\varphi\|_{\infty}$, where $\Lip(\varphi)$ denotes the smallest constant $L\ge 0$ such that
\[|\varphi(x)-\varphi(y)|\le L|x-y|\quad\text{for every}\;\;x,y\in X.\]
For real numbers $a,b\in\R$, denote $a\vee b:=\max(a,b)$ and $a\wedge b:=\min(a,b)$. For a measurable subset $U\subset\R^d$ with finite Lebesgue measure, denote $|U|:=\int_U\,dx$. 
For a measurable subset $U\subset\R^d$, $p\ge 1$, let $L^p_{\loc}(U)$ be the set of all Lebesgue a.e. identified equivalence classes of measurable functions $f:U\to\R$ such that $f|_K\in L^p(K)$ for every compact set $K\subset U$.
For separable Hilbert spaces $V_1$, $V_2$, denote by $L(V_1,V_2)$ the space of linear operators from $V_1$ to $V_2$, and denote by $L_2(V_1,V_2)$ the space of \emph{Hilbert-Schmidt operators} from $V_1$ to $V_2$. Abbreviate $L(V_1):=L(V_1,V_1)$ and $L_2(V_1):=L_2(V_1,V_1)$.

\subsection*{Organization of the paper}

In Section \ref{sec:2}, we discuss the main assumptions for well-posedness of equation \eqref{eq:gen_model}. In Subsection \ref{subsec:subspace}, we introduce the nonlocal Hilbert subspace, that is needed for the existence of invariant measures. In Section \ref{sec:3}, we provide an existence and uniqueness result for solutions to \eqref{eq:gen_model}. The remaining Section \ref{sec:4} is devoted for the discussion of invariant measures. In Subsection \ref{subsec:4.1}, additional assumptions and the invariance of the nonlocal Hilbert space are discussed. In Subsection \ref{subsec:4.2}, we introduce the necessary background facts on invariant measures and prove the existence of at least one invariant measure. Finally, in Subsection \ref{subsec:4.3}, we introduce another assumption and prove the existence of at most one invariant measure with second moments, which is exponentially mixing and exponentially ergodic. We discuss the additional case of a monotone activation function in Subsection \ref{subsec:4.4}.

\section{Assumptions and examples}\label{sec:2}

Let $U\subset\R^d$ be the closure of a non-empty open domain. Let $\rho\in L^1_{\text{loc}}(U)$ with $\rho\ge 0$ $dx$-a.e. on $U$, and denote $H:=L^2(U,\rho)$ with inner product
$\langle u,v\rangle:=\int_U u(x)v(x)\rho(x)\,dx$ and norm $\|u\|:=|\langle u,u\rangle|^{1/2}$.
We do not assume the connectivity kernel $w$ to be necessarily symmetric, as we aim to consider more general kernels covering cases where the neural interaction may occur only in one direction, or where one direction is strongly preferred. To this end, define 
\begin{align*}
(K u)(x):=\int_U w(x,y)u(y)\,dy,\quad x\in U,
\end{align*}
for a.e. $x\in U$. 

We consider three separate cases for our setup of the spatial component in the domain $U$. The following definitions and results are needed for Case (ii) of Assumption \ref{assu:space} below.
See \cite[Chapter 7]{Gra:14} for a discussion of Muckenhoupt weights.

\begin{definition}\label{def:Muckenhoupt}
The weight $\rho\in L^1_{\loc}(\R^d)$ is called an \emph{$A_2$-Muckenhoupt weight}, denoted $\rho\in A_2$, if
\begin{align*}
[\rho]_{A_2}:=\sup_{Q\subset\R^d,\;Q\;\textup{cube}}\left(\frac{1}{|Q|}\int_Q\rho(x)\, dx\right)\left(\frac{1}{|Q|}\int_Q   \rho(x)^{-1}\, dx\right)<\infty.
\end{align*}
\end{definition}

\begin{example}
Let
    \begin{align*}
        \rho(x) = |x|^{\alpha},\quad x\in\R^d,\quad \alpha\in(-d,d).
    \end{align*}
Then, $\rho\in A_2$, see \cite[Chapter IX, Proposition 3.2, Corollary 4.4]{Torchinsky1986}.
If $\alpha\in (-d,0)$, then it is well-known that $\rho\in L^1(\R^d)$.
\end{example}

\begin{theorem}\label{thm:muckenhoupt_maximal}
If $\rho\in A_2$, then there exists a constant $C(d)>0$ such that for $u\in L^2(\R^d,\rho)$,
\begin{align*}
\|\mathfrak{M}(u)\|_{L^2(\R^d,\rho)}\leq C(d)[\rho]_{A_2}\|u\|_{L^2(\R^d,\rho)},
\end{align*}
where $\mathfrak{M}$ denotes the \emph{Hardy-Littlewood maximal} function given by
\begin{equation}\label{eq:Max}
\mathfrak{M}(u)(x): = \sup_{x\in B,\;B\;\textup{ball}}\frac{1}{|B|}\int_{\R^d}|u(y)|\, dy,\quad x\in\R^d.
\end{equation}
\end{theorem}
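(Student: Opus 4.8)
The plan is to deduce the strong $L^{2}(\R^d,\rho)$-bound for $\mathfrak{M}$ from a \emph{weighted weak-type $(2,2)$ inequality}, combined with the openness of the Muckenhoupt classes and the Marcinkiewicz interpolation theorem. Throughout, write $\rho(E):=\int_E\rho(x)\,dx$ for Borel sets $E\subset\R^d$ and let $C(d)$ denote a positive dimensional constant whose value may change from line to line; note that, up to such a constant, cubes may be replaced by balls in Definition~\ref{def:Muckenhoupt}, so that $\big(\int_B\rho\big)\big(\int_B\rho^{-1}\big)\le C(d)[\rho]_{A_2}|B|^{2}$ for every ball $B$. The main step is the weak-type estimate
\[
\rho\big(\{x\in\R^d:\mathfrak{M}(u)(x)>\lambda\}\big)\le\frac{C(d)\,[\rho]_{A_2}^{2}}{\lambda^{2}}\int_{\R^d}|u(y)|^{2}\rho(y)\,dy,\qquad \lambda>0,\ u\in L^{2}(\R^d,\rho).
\]

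To prove this, I would fix $\lambda>0$, set $E_\lambda:=\{\mathfrak{M}(u)>\lambda\}$, and for each $x\in E_\lambda$ select a ball $B_x\ni x$ with $|B_x|^{-1}\int_{B_x}|u|\,dy>\lambda$; the Vitali $5r$-covering lemma (applied first to the maximal function truncated to balls of radius at most $R$, then letting $R\to\infty$) produces a pairwise disjoint subfamily $\{B_j\}_j$ with $E_\lambda\subset\bigcup_j 5B_j$. The key computation is that, writing $|u|=(|u|\rho^{1/2})\cdot\rho^{-1/2}$ on $B_j$, the Cauchy--Schwarz inequality together with the $A_2$-condition gives
\[
\lambda^{2}|B_j|^{2}<\Big(\int_{B_j}|u|^{2}\rho\,dy\Big)\Big(\int_{B_j}\rho^{-1}\,dy\Big)\le C(d)[\rho]_{A_2}\,\frac{|B_j|^{2}}{\rho(B_j)}\int_{B_j}|u|^{2}\rho\,dy ,
\]
so that $\rho(B_j)\le C(d)[\rho]_{A_2}\lambda^{-2}\int_{B_j}|u|^{2}\rho\,dy$. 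Combining this with the doubling bound $\rho(5B_j)\le C(d)[\rho]_{A_2}\rho(B_j)$ — itself a consequence of the $A_2$-condition via $\rho(E)\ge (C(d)[\rho]_{A_2})^{-1}(|E|/|B|)^{2}\rho(B)$ for $E\subset B$ — and summing over the pairwise disjoint $B_j$ yields the weak-type estimate stated above.

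It remains to upgrade this to the strong bound. A weak-type $(2,2)$ inequality does not interpolate with itself, so I would invoke the reverse Hölder inequality for $A_2$-weights, which makes the Muckenhoupt class open: $\rho\in A_2$ implies $\rho\in A_{2-\varepsilon}$ for some $\varepsilon=\varepsilon(d,[\rho]_{A_2})>0$ (see \cite[Chapter 7]{Gra:14}). Rerunning the covering argument above with Hölder's inequality (exponents $2-\varepsilon$ and $\tfrac{2-\varepsilon}{1-\varepsilon}$) in place of Cauchy--Schwarz then gives a weak-type $(2-\varepsilon,2-\varepsilon)$ bound for $\mathfrak{M}$ with respect to $\rho\,dx$; since $\mathfrak{M}$ is sublinear and trivially maps $L^{\infty}$ into $L^{\infty}$ with norm $1$, the Marcinkiewicz interpolation theorem for the $\sigma$-finite measure $\rho\,dx$ (endpoints $2-\varepsilon$ and $\infty$) produces $\|\mathfrak{M}(u)\|_{L^{2}(\R^d,\rho)}\le C(d,[\rho]_{A_2})\|u\|_{L^{2}(\R^d,\rho)}$. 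The only non-elementary ingredient in this scheme is the reverse Hölder self-improvement; everything else is a covering lemma plus interpolation. The real obstacle, and the only genuinely delicate point, is recovering the \emph{linear} dependence $C(d)[\rho]_{A_2}$ claimed in Theorem~\ref{thm:muckenhoupt_maximal}, rather than an unspecified function of $[\rho]_{A_2}$: this sharper statement is Buckley's maximal theorem, and it requires the quantitatively sharp reverse Hölder exponent $r=1+c(d)[\rho]_{A_2}^{-1}$ together with careful bookkeeping of the interpolation constant (alternatively, a sparse-domination argument). For the applications in Section~\ref{sec:2} it is likely that boundedness of $\mathfrak{M}$ on $L^{2}(\R^d,\rho)$ with some constant depending on $[\rho]_{A_2}$ already suffices, in which case the argument above suffices without the sharp constant.
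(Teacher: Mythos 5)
The paper does not prove this statement at all: it simply cites \cite[Theorem 7.1.9]{Gra:14}, so there is no ``paper's approach'' to compare against beyond that reference. Your sketch is essentially the standard proof of the cited result, and it is correct: the weighted weak-type $(2,2)$ bound via the Vitali $5r$-covering lemma, Cauchy--Schwarz with $|u|=(|u|\rho^{1/2})\rho^{-1/2}$, and the $A_2$-condition (including the doubling estimate $\rho(5B)\le C(d)[\rho]_{A_2}\rho(B)$) is exactly right, as is the upgrade to a strong bound via the openness $A_2\subset A_{2-\varepsilon}$ from reverse H\"older and Marcinkiewicz interpolation against the trivial $L^\infty$ endpoint. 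You are also right to flag that this scheme, run naively, only yields a constant that is some unspecified function of $[\rho]_{A_2}$, whereas the linear dependence $C(d)[\rho]_{A_2}$ asserted in the statement is Buckley's sharp bound ($\|\mathfrak{M}\|_{L^p(w)\to L^p(w)}\lesssim [w]_{A_p}^{1/(p-1)}$ at $p=2$) and requires the quantitatively sharp reverse H\"older exponent; this is precisely the form in which Grafakos states Theorem 7.1.9, so the paper's citation covers the sharp constant that your sketch only reaches with the extra bookkeeping you describe. (Minor aside: the inner integral in the paper's display \eqref{eq:Max} should read $\int_B$ rather than $\int_{\R^d}$; your argument implicitly uses the correct definition.)
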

\begin{proof}
See \cite[Theorem 7.1.9]{Gra:14}.
\end{proof}

The next definition can be found in \cite[Remark 2.1.11]{Gra:14}.

\begin{definition}\label{def:radially}
If for $J\in L^1(\R^d)$ there exists a non-negative decreasing function $j_0:\R\to [0,\infty)$ that is continuous except at a finite number of points such that
\begin{align*}
|J(x)|\leq j_0(|x|) =:J_0(x),\quad x\in\R^d,
\end{align*}
then $J_0$ is called a {radially decreasing majorant} of $J$.
\end{definition}

\begin{theorem}\label{thm:muckenhoupt_majorant}
If $J_0\in L^1(\R^d)$ is a radially decreasing majorant of $J\in L^1(\R^d)$,
then for all $u\in L^1_{\loc}(\R^d)$, we have the following pointwise estimate
\begin{align*}
(u\ast J)(x)\leq\|J_0\|_{L^1(\R^d)}\mathfrak{M}(u)(x),\quad x\in\R^d,
\end{align*}
where $u\ast J:=\int_{\R^d}J(\cdot-y) u(y)\,dy$ denotes \emph{convolution}, and $\Mfrak$ is as in \eqref{eq:Max}.
\end{theorem}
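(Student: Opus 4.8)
The plan is to reduce to the majorant itself at a single point and then exploit that a radially decreasing function is a superposition of normalized indicators of balls. Since $|J(z)|\le J_0(z)$ pointwise, for every $x\in\R^d$ one has
\[
|(u\ast J)(x)|\le\int_{\R^d}|J(x-y)|\,|u(y)|\,dy\le\int_{\R^d}J_0(x-y)\,|u(y)|\,dy=(|u|\ast J_0)(x),
\]
so it suffices to prove the asserted bound with $J$ replaced by $J_0$ and $u$ by $|u|$; and after a translation we may fix the evaluation point to be $x=0$.

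Next I would write $J_0(y)=j_0(|y|)$ with $j_0$ non-negative and non-increasing, and invoke the layer-cake formula $j_0(r)=\int_0^\infty\mathbbm{1}_{\{j_0>\lambda\}}(r)\,d\lambda$. Because $j_0$ is non-increasing, each superlevel set $\{r\ge 0:j_0(r)>\lambda\}$ is an interval of the form $[0,R(\lambda))$ (or $[0,R(\lambda)]$; the endpoint is irrelevant for integration), so $\{y\in\R^d:J_0(y)>\lambda\}$ is the ball $B_{R(\lambda)}$ centered at the origin, with $R(\lambda)<\infty$ for $\lambda>0$ since $J_0\in L^1(\R^d)$. The hypothesis that $j_0$ has only finitely many discontinuities guarantees this description holds for all but finitely many $\lambda$, which is enough.

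Then I would compute, using Tonelli's theorem (all integrands are non-negative),
\[
(|u|\ast J_0)(0)=\int_{\R^d}J_0(y)\,|u(y)|\,dy=\int_0^\infty\left(\int_{B_{R(\lambda)}}|u(y)|\,dy\right)d\lambda=\int_0^\infty|B_{R(\lambda)}|\left(\frac{1}{|B_{R(\lambda)}|}\int_{B_{R(\lambda)}}|u(y)|\,dy\right)d\lambda.
\]
Since $0\in B_{R(\lambda)}$, each average is bounded by $\Mfrak(u)(0)$ by definition of the Hardy–Littlewood maximal function (if $\Mfrak(u)(0)=\infty$ the inequality is trivial, so we may assume it finite). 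Pulling out this constant and applying Tonelli once more,
\[
(|u|\ast J_0)(0)\le\Mfrak(u)(0)\int_0^\infty|B_{R(\lambda)}|\,d\lambda=\Mfrak(u)(0)\int_0^\infty\int_{\R^d}\mathbbm{1}_{B_{R(\lambda)}}(y)\,dy\,d\lambda=\Mfrak(u)(0)\,\|J_0\|_{L^1(\R^d)},
\]
which is the claim at $x=0$, hence at every $x$ by translation.

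The argument is essentially bookkeeping; the only point that requires care is that the superlevel sets of $J_0$ are balls \emph{centered at the point at which the maximal function is evaluated}, which is precisely why one first normalizes to $x=0$ by translation and why radial monotonicity of the majorant — rather than mere integrability — is needed. One should also dispense with the harmless ambiguity between open and closed balls and with the (measure-zero in $\lambda$) set where $R(\lambda)=0$, neither of which affects the integrals; see \cite[Remark 2.1.11]{Gra:14} for the classical form of this estimate.
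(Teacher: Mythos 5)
Your proof is correct and is essentially the classical argument: the paper itself offers no proof of this statement but defers to \cite[Theorem 2.1.10, Remark 2.1.11, Corollary 2.1.12]{Gra:14}, where the same estimate is obtained by approximating $J_0$ from below by simple radial functions supported on balls centered at the evaluation point, i.e.\ a discretized version of your layer-cake decomposition. (Only a trivial quibble: the set of $\lambda$ with $R(\lambda)=0$ need not be Lebesgue-null --- it is typically a half-line --- but, as you correctly note, those levels contribute zero to both sides, so nothing is affected.)
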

\begin{proof}
See \cite[Theorem 2.1.10, Remark 2.1.11, Corollary 2.1.12]{Gra:14}.
\end{proof}

We are ready to formulate the three cases of our assumptions on $U$, $f$ and $w$.

\begin{assumption}\label{assu:space}
Given that $H=L^2(U,\rho)$, we consider three different cases:
\begin{enumerate}
    \item $U$ is the closure of a nonempty, open, bounded subset of $\R^d$, while $f$ is Lipschitz continuous and $\rho\equiv 1$, and the kernel satisfies the integrability condition
\[\int_U\int_U |w(x,y)|^2\,dx\,dy<\infty.\]
    \item We consider the whole space $U=\R^d$ with Muckenhoupt weight $\rho\in A_2\cap L^1(\R^d)$.
 Assume that $f$ is Lipschitz continuous, and that $w(x,y)=J(x-y)$ is a convolution kernel such that $J$ has a radially decreasing majorant. 
    \item $U$ is the closure of an open, nonempty subset of $\R^d$, $\rho> 0$ a.e., $\rho,\rho^{-1}\in L^1(U)$, $f$ is Lipschitz continuous, and the kernel satisfies the integrability and boundedness condition
\[\sup_{y\in U}\int_U |w(x,y)|^2\rho(x)\,dx<\infty.\]
\end{enumerate}
\end{assumption}

\begin{example}\label{ex:activationLip}
The following subset of Example \ref{ex:activation} is Lipschitz continuous.
\begin{description} 
\item[ReLU] $f(x):=x\vee 0$;
\item[Logistic] $f(x)=(1+e^{-x})^{-1}$;
\item[Hyperbolic tangent] $f(x)=\tanh(x)$.
\end{description}
The verification of this fact is left to the reader.
\end{example}

\begin{assumption}\label{assu:noise}
    For the noise coefficient $B:L^2(U,\rho)\to L_2(V,L^2(U,\rho))$ for the model \eqref{eq:gen_model}, we assume that there exists a constant $C_B\ge 0$ with 
    \begin{align*}
        \lVert B(u)-B(v)\rVert_{L_2(V,H)}\leq C_{B}\lVert u-v\rVert,
    \end{align*}
    for every $u,v\in H$.
\end{assumption}
From this, we immediately get for $u\in H$,
\begin{align*}
\| B(u) \|_{L_2(V,H)} \leq C_B\| u \| + \| B(0) \|_{L_2(V,H)}.
\end{align*}

Note that, in light of model \eqref{eq:specialmodel}, from the collection of Example \ref{ex:activationLip}, only the logistic function satisfies
that
\[\sqrt{f}:x\mapsto\sqrt{(1+e^{-x})^{-1}}\]
is Lipschitz continuous as well, as
\[(\sqrt{f})'(x)=\frac{1}{2}e^{-x}(1+e^{-x})^{-3/2}\le\frac{1}{3\sqrt{3}},\quad x\in\R,\]
where the global maximum is attained at $x=-\log(2)$.

We redefine the activation function $f$ in terms of a Nemytskii operator. Let 
\[F(u)(x):=f(u(x)),\quad u\in H\]
be the associated \emph{Nemytskii operator}. The properties of $F$ will depend on the assumptions we make on $f$, that is, the Lipschitz condition.
\begin{lemma}\label{lem:nemytskii_lipschitz}
Assume that $f$ is Lipschitz continuous and that $\rho\in L^1(U)$. Then the associated Nemytskii operator $F$ is a nonlinear, bounded, Lipschitz continuous operator on $H=L^2(U,\rho)$ with
\[\|F(u)\|\le \sqrt{1+\zeta}\Lip(f)\|u\|+\sqrt{1+C(\zeta)}|f(0)|\|\rho\|_{L^1(U)}^{1/2},\]
for any $\zeta>0$ and some $C(\zeta)>0$,
and we have that $\Lip(F)=\Lip(f)$. If $\zeta=1$, then $C(\zeta)=1$.
\end{lemma}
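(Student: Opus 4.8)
The plan is to prove the two assertions separately: the quantitative norm bound on $F$, and the identity $\Lip(F)=\Lip(f)$. For the norm bound, the key observation is that pointwise $|f(t)| \le |f(0)| + \Lip(f)|t|$ for every $t\in\R$, so $|F(u)(x)| \le |f(0)| + \Lip(f)|u(x)|$ for a.e.\ $x\in U$. Squaring and using the elementary inequality $(a+b)^2 \le (1+\zeta) a^2 + (1 + \zeta^{-1}) b^2$ — valid for any $\zeta>0$ by Young's inequality, with $\zeta = 1$ giving the constant $2 = 1 + 1$ — I would write
\[
\|F(u)\|^2 = \int_U |F(u)(x)|^2\,\rho(x)\,dx \le (1+\zeta)\Lip(f)^2 \int_U |u(x)|^2 \rho(x)\,dx + (1+\zeta^{-1})|f(0)|^2 \int_U \rho(x)\,dx,
\]
which is $(1+\zeta)\Lip(f)^2\|u\|^2 + (1+\zeta^{-1})|f(0)|^2\|\rho\|_{L^1(U)}$. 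Taking square roots and using $\sqrt{a+b}\le\sqrt a+\sqrt b$ yields the stated estimate with $C(\zeta)=\zeta^{-1}$, so that $C(1)=1$; this also shows $F$ maps $H$ into $H$ and is bounded (maps bounded sets to bounded sets), using $\rho\in L^1(U)$ in an essential way.

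For the Lipschitz identity, the inequality $\Lip(F)\le\Lip(f)$ is immediate: for $u,v\in H$,
\[
\|F(u)-F(v)\|^2 = \int_U |f(u(x))-f(v(x))|^2\rho(x)\,dx \le \Lip(f)^2 \int_U |u(x)-v(x)|^2\rho(x)\,dx = \Lip(f)^2\|u-v\|^2.
\]
In particular $F(u)\in H$ whenever $u\in H$ (combined with $F(0)\in H$, which follows from the norm bound), confirming $F$ is well-defined as a map $H\to H$. For the reverse inequality $\Lip(F)\ge\Lip(f)$, I would fix $\delta>0$ and pick points $s\ne t$ in $\R$ with $|f(s)-f(t)|\ge(\Lip(f)-\delta)|s-t|$; then, choosing any measurable set $A\subset U$ with $0<\nu(A)<\infty$ (possible since $\rho\in L^1_{\loc}$, $\rho\ge 0$ nontrivially, and $\nu(U)=\|\rho\|_{L^1}<\infty$ under the hypothesis), the constant functions $u\equiv s$ and $v\equiv t$ — or, to stay inside $H$ without assuming $\nu(U)<\infty$ is needed, $u=s\mathbbm{1}_A$, $v=t\mathbbm{1}_A$ — give $\|F(u)-F(v)\| = |f(s)-f(t)|\,\nu(A)^{1/2} \ge (\Lip(f)-\delta)|s-t|\,\nu(A)^{1/2} = (\Lip(f)-\delta)\|u-v\|$. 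Letting $\delta\downarrow 0$ gives $\Lip(F)\ge\Lip(f)$, and hence equality.

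I do not anticipate a genuine obstacle here; the statement is essentially a standard fact about Nemytskii operators on weighted $L^2$ spaces, and the only mild care needed is (a) getting the constant $C(\zeta)=\zeta^{-1}$ clean enough to report $C(1)=1$, which is just bookkeeping in Young's inequality, and (b) ensuring the test functions used to prove $\Lip(F)\ge\Lip(f)$ actually lie in $H$ — handled by restricting to a finite-$\nu$-measure set, which exists because $\rho\in L^1(U)$ is assumed (indeed the whole space has finite measure here). The boundedness and Lipschitz continuity together with the explicit constants are exactly what later sections need to control the drift term $\int_U w(\cdot,y)f(u(y,t))\,dy = KF(u)$ in \eqref{eq:gen_model}.
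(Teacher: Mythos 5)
Your proposal is correct and follows essentially the same route as the paper's proof for the quantitative bound and for the inequality $\Lip(F)\le\Lip(f)$: the pointwise estimate $|f(t)|\le\Lip(f)|t|+|f(0)|$, Young's inequality with parameter $\zeta$ (your explicit choice $C(\zeta)=\zeta^{-1}$ is consistent with the paper's unspecified $C(\zeta)$ and with $C(1)=1$), integration against $\rho$, and $\sqrt{a+b}\le\sqrt a+\sqrt b$ — a step you state correctly where the paper writes it, slightly carelessly, as an equality. The one genuine difference is that you also prove the reverse inequality $\Lip(F)\ge\Lip(f)$ via the test functions $u=s\mathbbm{1}_A$, $v=t\mathbbm{1}_A$ on a set $A$ with $0<\nu(A)<\infty$; the paper's proof stops at $\|F(u)-F(v)\|\le\Lip(f)\|u-v\|$ and never establishes the claimed equality $\Lip(F)=\Lip(f)$, so your argument actually completes the lemma as stated (modulo the harmless nondegeneracy requirement $\nu(U)>0$, which holds in all three cases of Assumption \ref{assu:space}). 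Since only the upper bound $\Lip(F)\le\Lip(f)$ is used downstream, this addition does not affect the rest of the paper, but it is the more faithful proof of the statement.
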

\begin{proof}
Given that $f\in\Lip(\R)$, we know that
\begin{align*}
    |f(x)-f(y)|\leq\Lip(f)|x-y|,\quad x,y\in\R,\ x\neq y,
\end{align*}
and therefore, we can also conclude that
\begin{align*}
    |f(x)| \leq\Lip(f)|x| + |f(0)|.
\end{align*}
Now, for $u\in H$, and for any $\zeta>0$, and for some $C(\zeta)>0$, by Young's inequality,
\begin{align*}
\|F(u)\| &= \left(\int_U |f(u(x))|^2\rho(x)\, dx\right)^{1/2}\\
\leq& \left(\int_U \left(\Lip(f)|u(x)| + |f(0)|\right)^2\rho(x)\, dx\right)^{1/2}\\
\leq& \left(\int_U \left((1+\zeta)\Lip(f)^2|u(x)|^2 + (1+C(\zeta))|f(0)|^2\right)\rho(x)\, dx\right)^{1/2}\\
&=\sqrt{1+\zeta}\Lip(f)\|u\| + \sqrt{1+C(\zeta)}|f(0)|\left(\int_U\rho(x)\,dx\right)^{1/2}.
\end{align*}
Similarly, for $u,v\in H$,
\begin{align*}
\|F(u)-F(v)\| &= \left(\int_U |f(u(x))-f(v(x))|^2\rho(x)\, dx\right)^{1/2}\\
\leq& \left(\int_U \Lip(f)^2|u(x)-v(x)|^2\rho(x)\, dx\right)^{1/2}\\
&= \Lip(f)\left(\int_U |u(x)-v(x)|^2\rho(x)\, dx\right)^{1/2}\\
&=\Lip(f)\|u-v\|.
\end{align*}
\end{proof}

In the rest of this work, for simplicity, we will apply Lemma \ref{lem:nemytskii_lipschitz} for $\zeta=C(\zeta)=1$, thus obtaining
the prefactor $\sqrt{2}$.

\subsection{The nonlocal Hilbert space}\label{subsec:subspace}

Let $\hat{w}$ be the symmetric part of $w$, that is
\begin{align*}
    \hat{w}(x,y):=\frac{1}{2}[w(x,y)+w(y,x)].
\end{align*}
If $w$ is symmetric, then $\hat{w}=w$. Furthermore, define
\begin{align*}
(\hat{K} u)(x):=\int_U \hat{w}(x,y)u(y)\,dy,\quad x\in U,
\end{align*}
for a.e. $x\in U$.
Clearly, by standard results, the condition $\hat{w}\in L^2(U\times U,\rho\otimes\rho)$, in other words,
\[\int_U\int_U [w(x,y)+w(y,x)]^2\rho(x)\rho(y)\,dx\,dy<\infty,\]
implies that $\hat{K}\in L_2(H,H)$ is a self-adjoint Hilbert-Schmidt operator, see \cite{HS:78}. Define also the anti-symmetric part $\check{w}$ of $w$, that is,
\begin{align*}
    \check{w}(x,y):=\frac{1}{2}[w(x,y)-w(y,x)].
\end{align*}
Similarly,
\begin{align*}
(\check{K} u)(x):=\int_U \check{w}(x,y)u(y)\,dy,\quad x\in U,
\end{align*}
for a.e. $x\in U$. We have that $w=\hat{w}+\check{w}$, and thus $K=\hat{K}+\check{K}$.

\begin{assumption}\label{assu:hat}
Suppose that $\hat{w}\in L^2(U\times U,\rho\otimes\rho)$.
    Furthermore, suppose that $\hat{K}$ is either non-negative definite or non-positive definite, that is, for every $u\in L^2(U,\rho)$,
\[\pm\int_U\int_U\hat{w}(x,y)u(x)u(y)\rho(x)\rho(y)\,dx\,dy\ge 0.\]
\end{assumption}

In terms of the kernel, in case (i) of Assumption \ref{assu:space}, the second condition in Assumption \ref{assu:hat} is called \emph{Mercer's condition}, which is implied by
    \begin{equation}\label{eq:posneg}\pm\sum_{i,j=1}^N c_i c_j \hat{w}(x_i,x_j)\ge 0,\end{equation}
for any choice of $N\in\N$, $\{x_1,\ldots,x_N\}\subset U$, and $\{c_1,\ldots,c_N\}\subset\R$, see \cite{Ferreira:2013ei}.

Assumption \ref{assu:hat} is equivalent to the statement that $K$ is a Hilbert-Schmidt operator on $H$, and that $\langle \pm\hat{K}u,u\rangle\ge 0$ for every $u\in H$.
We shall use the notation $\pm \hat{K}$ to denote $\hat{K}$ if $\hat{K}$ is non-negative definite, and to denote $-\hat{K}$ if $\hat{K}$ is non-positive definite, thus $\pm\hat{K}$ is non-negative definite and self-adjoint.

Note that \eqref{eq:posneg} does neither imply, nor require that $\hat{w}\ge 0$ or $\hat{w}\le 0$ in a pointwise sense. In fact, the following examples from \cite[Section 4]{KuehnToelle19} yield all non-negative definite and symmetric kernels for $U=\R^d$ and $\rho\equiv 1$.

\begin{example}[Non-negative definite kernels]\label{ex:Fourier} The following kernels\footnote{The first three kernels are the characteristic functions of a \emph{centered Gaussian}, a \emph{centered Cauchy}, a \emph{centered Laplace distribution}, respectively. Kernel (iv) is the characteristic function of a \emph{uniform distribution} on $[-1,1]^d$. Kernel (v) is the characteristic function of a \emph{symmetric sum of Dirac distributions}.}, of the form $\hat{w}(x,y):= J(x-y)$ are non-negative definite on $\R^d$.
\begin{enumerate}
\item $J(x)=e^{-\frac{1}{2}\langle x,Mx\rangle}$, where $M\in\R^{d\times d}$ is a symmetric and non-negative definite matrix \emph{(centered Gaussian)};
\item $J(x)=e^{-\sqrt{\langle x,Mx\rangle}}$, where $M\in\R^{d\times d}$ is a symmetric and non-negative definite matrix;
\item $J(x)=\left(1+\frac{1}{2}\langle x,Mx\rangle\right)^{-1}$, where $M\in\R^{d\times d}$ is a symmetric and non-negative definite matrix;
\item $J(x)=J(x_1,\ldots,x_d)=\prod_{j=1}^d \frac{\sin(x_j)}{x_j}$, where the factors of the product are (by definition) equal to $1$ if $x_j=0$;
\item $J(x)=\sum_{i=1}^\infty a_i\cos(\langle m_i,x\rangle)$, where $a_i\ge 0$ with $\sum_{i=1}^\infty a_i=1$ and $m_i\in\R^d$, $m_i\not=\pm m_j$ for $i\not=j$;
\item $J(x)=(1-x^2)e^{-\frac{x^2}{2}}$ \emph{(Mexican hat)};
\item $J(x)=e^{-\frac{x^2}{2}}-Ae^{-\frac{x^2}{s^2}}$ for $\sqrt{2}\le s\le\frac{\sqrt{2}}{A}$ \emph{(another Mexican hat)};
\item $J(x)=e^{-\gamma_1|x|}-\Gamma e^{-\gamma_2|x|}$ for $0<\Gamma\le\frac{\gamma_2}{\gamma_1}$, and $\gamma_1>\gamma_2>0$ \emph{(yet another Mexican hat)};
\item $J(x)=e^{-b|x|}(b\sin(|x|)+\cos(x))$ for $b>0$;
\item $J(x)=\frac{1}{4}(1-|x|)e^{-|x|}$ \emph{(wizard hat)}.
\end{enumerate}
\end{example}

In order to access the above example in our situation, let $\pm\hat{w}:\R^d\times\R^d\to\R$ be non-negative definite
we note that in case (i) of Assumption \ref{assu:space}, for $u\in L^2(U)$,
\[\bar{u}(x):=\begin{cases}u(x)&\text{\; if\;} x\in U,\\ 0&\text{\; if\;}x\in\R^d\setminus U,\end{cases},\]
we have that $\bar{u}\in L^2(\R^d)$, and thus,
\[0\le\int_{\R^d}\int_{\R^d}(\pm\hat{w}(x,y))\bar{u}(x)\bar{u}(y)\,dx\,dy=\int_U\int_U(\pm\hat{w}(x,y))u(x)u(y)\,dx\,dy.\]
Hence $\pm\hat{w}:U\times U\to\R$ is non-negative definite.

As Example \ref{ex:Fourier} relies on the Fourier transform and Bochner's theorem, in cases (ii) and (iii) of Assumption \ref{assu:space}, it is not clear if the kernels remain non-negative definite in the weighted space.

The following construction of the nonlocal Hilbert space is taken from \cite{KuehnToelle19}.
Under Assumption \ref{assu:hat}, the spectrum of $\pm\hat{K}$ is real and non-negative, consisting of a sequence of positive real eigenvalues $\{\pm\lambda_i\}$, reordered in such a way that $\lim_{i\to\infty}\lambda_i=0$. By $\hat{w}\in L^2(U\times U,\rho\otimes\rho)$, we have that $\{\pm\lambda_i\}\in\ell^2$, and there is a (possibly finite) orthonormal system $\{e_i\}$ of eigenfunctions in $H$ for $\pm \hat{K}$.
By the spectral theorem,
\[H=\operatorname{ker}(\pm\hat{K})\oplus\overline{\operatorname{span}\{e_i\}},\]
where the decomposition is orthogonal, see \cite{ReSi1}.
Define
\[H_1:=(\operatorname{ker}(\pm\hat{K}))^\perp =\overline{\operatorname{span}\{e_i\}}.\]
$H_1$ is a closed Hilbert subspace of $H$. It can be renormed with the (nonlocal) norm
\[\|v\|_1:=\|(\pm \hat{K})^{-\frac{1}{2}}v\|,\quad v\in H_1,\]
where $(\pm \hat{K})^{-\frac{1}{2}}:H_1\to H_1$ is the operator square root of the Moore-Penrose pseudoinverse $(\pm\hat{K})^{-1}:H_1\to H_1$ of $\pm\hat{K}:H\to H_1$, see \cite[Section 2.1.2]{Hagen:vz}. $H_1$ is a separable Hilbert space with nonlocal norm and inner product
\[\langle u,v\rangle_1:=\langle(\pm \hat{K})^{-\frac{1}{2}}u,(\pm \hat{K})^{-\frac{1}{2}}v\rangle,\quad u,v\in H_1.\]
Note that
 \begin{equation}\label{eq:H_1-norm-bound}
        \|u\| \leq \|(\pm\hat{K}^{1/2})\|_{L(H)}\|u\|_1,\quad u\in H_1,
    \end{equation}
which follows from
\begin{align*}
\|u\| = \|(\pm\hat{K})^{1/2}(\pm\hat{K})^{-1/2} u\| \leq \|(\pm\hat{K})^{1/2}\|_{L(H)} \|(\pm \hat{K})^{-1/2}u\|,\quad u\in H_1.
\end{align*}

\begin{lemma}\label{lem:compact}
For any $R>0$,
\[\{u\in H\;\colon\;\|u\|_1\le R\}\]
is a compact subset of $H$.
\end{lemma}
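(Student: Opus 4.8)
Write $A:=\{u\in H:\|u\|_1\le R\}$ and set $S:=(\pm\hat K)^{1/2}$, so that $S$ is a self-adjoint, non-negative bounded operator on $H$ with $Se_i=\sqrt{\lambda_i}\,e_i$ and $S\equiv 0$ on $\ker(\pm\hat K)=H_1^\perp$. The strategy is to exhibit $A$ as a \emph{closed} subset of a \emph{relatively compact} set, which forces $A$ to be compact. The first step is to identify $A$ as the image of a bounded ball under $S$. Indeed, for $u\in H_1$ with $\|u\|_1<\infty$ one has $u=(\pm\hat K)^{1/2}(\pm\hat K)^{-1/2}u$, since on $\overline{\operatorname{span}\{e_i\}}$ the maps $(\pm\hat K)^{1/2}$ and $(\pm\hat K)^{-1/2}$ act (on the spectral decomposition) as $e_i\mapsto\sqrt{\lambda_i}e_i$ and $e_i\mapsto\lambda_i^{-1/2}e_i$, and hence compose to the identity on $H_1$. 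Therefore $u\in A$ precisely when $v:=(\pm\hat K)^{-1/2}u\in H_1$ satisfies $\|v\|\le R$ and $u=Sv$; conversely $\|Sv\|_1=\|(\pm\hat K)^{-1/2}Sv\|=\|v\|$ for $v\in H_1$. Thus
\[A=S\bigl(\{v\in H_1:\|v\|\le R\}\bigr)\subseteq S\bigl(\{v\in H:\|v\|\le R\}\bigr).\]

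Next I would invoke compactness of $S$. Since $\pm\hat K$ is self-adjoint, non-negative and Hilbert--Schmidt (by $\hat w\in L^2(U\times U,\rho\otimes\rho)$), its eigenvalues satisfy $\lambda_i\to 0$, so $S$ is the operator-norm limit of its finite-rank spectral truncations and is therefore a compact operator on $H$. Consequently $S$ maps the bounded set $\{v\in H:\|v\|\le R\}$ to a relatively compact subset of $H$, whence $A\subseteq\overline{S(\{\|v\|\le R\})}$ lies in a compact subset of $H$.

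It remains to check that $A$ is closed in $H$, which together with the previous paragraph yields compactness. Let $u_n\in A$ with $u_n\to u$ in $H$. Since $H_1$ is a closed subspace, $u\in H_1$; writing $u_n=\sum_i c_i^{(n)}e_i$ and $u=\sum_i c_i e_i$, the continuity of $v\mapsto\langle v,e_i\rangle$ gives $c_i^{(n)}\to c_i$ for every $i$. As $\|u_n\|_1^2=\sum_i\lambda_i^{-1}(c_i^{(n)})^2\le R^2$, Fatou's lemma applied to the counting measure gives
\[\|u\|_1^2=\sum_i\lambda_i^{-1}c_i^2\le\liminf_{n\to\infty}\sum_i\lambda_i^{-1}(c_i^{(n)})^2\le R^2,\]
so $u\in A$. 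Hence $A$ is a closed subset of a compact set, and therefore compact. (Equivalently, this shows that the inclusion $(H_1,\|\cdot\|_1)\hookrightarrow H$ is a compact operator, which is the sense in which $H_1$ is ``compactly embedded''.)

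I do not anticipate a genuine obstacle: the only points requiring care are the bookkeeping around the Moore--Penrose pseudoinverse (the identity $(\pm\hat K)^{1/2}(\pm\hat K)^{-1/2}=\mathrm{id}$ on $H_1$ and the convention that $\|u\|_1=+\infty$ when $u\notin H_1$ or $u$ is not in the domain of $(\pm\hat K)^{-1/2}$), and the passage to the limit in the possibly infinite series defining $\|\cdot\|_1$ via Fatou's lemma. If the orthonormal system $\{e_i\}$ is finite, all series are finite sums and the statement reduces to the elementary fact that a closed bounded subset of a finite-dimensional space is compact.
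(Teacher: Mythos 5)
Your proof is correct and follows essentially the same route as the paper's: both identify the set $\{u\in H:\|u\|_1\le R\}$ as the image of the closed $H$-ball of radius $R$ under $(\pm\hat K)^{1/2}$, and both rest on the fact that this operator square root is compact because $\pm\hat K$ is Hilbert--Schmidt. The only cosmetic difference is in how the limit point is shown to stay in the set: the paper extracts a weakly convergent subsequence of the preimages $v_i=(\pm\hat K)^{-1/2}u_i$ and uses weak closedness of the ball, whereas you prove closedness via Fatou's lemma on the spectral coefficients; both are valid.
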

\begin{proof}
Let $u_i\in\{u\in H\;\colon\;\|u\|_1\le R\}$, $i\in\N$ be any sequence. Then there exist $v_i\in H$ with $v_i= (\pm\hat{K}^{-1/2}) u_i$ and $\|v_i\|\le R$, in other words, $\{v_i\}$ is contained in a bounded subset of $H$. However, by definition, $u_i=(\pm\hat{K}^{1/2})v_i$, and by the well-known fact that the operator square root of a compact operator is itself a compact operator (in our case, we can also use the spectral theorem), we get that a subsequence $\{u_{i_k}\}$ of
$\{u_i\}$ converges in $H$ to some $u_0\in H$. We can extract another non-relabeled subsequence, if necessary, such that $\{v_{i_k}\}$ converges weakly in $H$ to some $v_0$. By weak continuity of $\pm\hat{K}^{1/2}$, $u_0=(\pm\hat{K}^{1/2})v_0$, and as the closed ball with radius $R$ in $H$ is also weakly closed, we also get that $\|v_0\|\le R$ and thus $\|u_0\|_1\le R$, and
hence $\{u\in H\;\colon\;\|u\|_1\le R\}$ is sequentially compact in $H$, and thus compact by the axiom of choice.
\end{proof}

\section{Existence and uniqueness of solutions}\label{sec:3}
We can now move on to establishing existence and uniqueness of strong solutions to \eqref{eq:gen_model}. Let $B$ be a map from $H$ to $L_2(V,H)$, the space of linear Hilbert-Schmidt operators from $V$ to $H$. We introduce the usual notions of strong, weak and solutions.
Let $S(t):H\to H$ be defined by $u\mapsto e^{-\alpha t} u$. $\{S(t)\}_{t\ge 0}$ is a $C_0$-semigroup on $H$ with infinitesimal generator\footnote{Note that the domain of the infinitesimal generator of $\{S(t)\}_{t\ge 0}$ is all of $H$.} $u\mapsto -\alpha u$. Set $KF:H\to H$, $KF(u):=(K\circ F)(u)$, where $F$ is the Nemytskii operator on $H$ associated to the nonlinear activation function $f:\R\to\R$.

\begin{definition}\phantom{ }
\begin{enumerate}[(I)]
\item  A strong solution $u(t)$, $t\in[0,T]$ to \eqref{eq:gen_model} is an $H$-valued predictable process, such that
\begin{align*}
&\mathbb{P}\left(\int_0^T (\|u(s)\| + \|\alpha u(s)\|)\, ds <\infty \right) = 1,\quad
\mathbb{P}\left(\int_0^T \|B(u(s))\|^2_{L_2(V,H)}\, ds<\infty\right) = 1,
\end{align*}
and satisfies for arbitrary $t\in[0,T]$,
\begin{align*}
&u(t) = u(0) + \int_0^t\left(-\alpha u(s) + KF(u(s))\right)\, ds + \int_0^t B(u(s))\, dW(s),
\end{align*}
$\mathbb{P}$-a.s. in $H$.

\item A weak solution $u(t)$, $t\in[0,T]$ to \eqref{eq:gen_model} is an $H$-valued predictable process, such that
\begin{align*}
&\mathbb{P}\left(\int_0^T\|u(s)\|\, ds<\infty \right) = 1,\quad
\mathbb{P}\left(\int_0^T\|B(u(s))\|^2_{L_2(V,H)}\, ds<\infty\right) = 1,
\end{align*}
and satisfies for arbitrary $t\in[0,T]$,
\begin{align*}
\langle u(t),v\rangle
=&\langle u(0),v\rangle + \int_0^t\left(\langle u(s),-\alpha v\rangle + \langle KF(u(s)),v \rangle\right)\, ds
+ \int_0^t\langle v,B(u(s))\, dW(s)\rangle,
\end{align*}
$\mathbb{P}$-a.s. for every $v\in H$.

\item A mild solution $u(t)$, $t\in[0,T]$ to \eqref{eq:gen_model} is an $H$-valued predictable process that satisfies 
\begin{align*}
&\mathbb{P}\left(\int_0^T \|u(s)\|\, ds<\infty  \right) = 1,\quad\mathbb{P}\left(\int_0^T \|B(u(s))\|^2_{L_2(V,H)}\, ds\right) = 1,
\end{align*}
and satisfies for arbitrary $t\in[0,T]$,
\begin{align*}
&u(t) = S(t)u(0) + \int_0^t S(t-s)KF(u(s))\, ds+\int_0^t S(t-s)B(u(s))\, dW(s),
\end{align*}
$\mathbb{P}$-a.s. in $H$.
\end{enumerate}
\end{definition}

\begin{remark}
A strong solution is automatically a weak solution. Under our assumptions, a mild solution is a strong solution, and vice versa, see \cite[Theorem 6.5]{DaPrZa:2nd}. See also \cite[Proposition G.0.5]{LR:15}. Note that our notion of strong and weak solutions, respectively,
coincides with the notion of \emph{analytically strong} and \emph{analytically weak solutions} in \cite[Appendix G]{LR:15}, respectively.
All of our solutions are strong solutions in the probabilistic sense, that is, a unique solution exists for any stochastic basis
$(\Omega,\mathcal{F},\{\mathcal{F}_t\}_{t\ge 0},\P,\{W(t)\}_{t\ge 0})$.
\end{remark}

The following theorem is verified by the help of a standard result \cite[Theorem 7.5]{DaPrZa:2nd}. We would like to point out that rigorous results on existence and uniqueness of solutions to \eqref{eq:gen_model} have been previously obtained in \cite{KuehnRiedler,FaugerasInglis} for symmetric kernels. Note that existence and uniqueness of solutions to \eqref{eq:gen_model} do not rely on the symmetry of the kernel $w$, and that our assumptions slightly differ from those in \cite{FaugerasInglis}. We prove Theorem \ref{thm:main} (i).

\begin{theorem}\label{thm:existence_all_cases}
Assume that Assumptions \ref{assu:space} and \ref{assu:noise} are satisfied. Then, for any $p\ge 2$ and for any initial datum $u(0)\in L^p(\Omega,\mathcal{F}_0,\P;H)$, there exists a unique $\P$-a.s. continuous and adapted
strong solution $u(t)$, $t\in [0,T]$ of \eqref{eq:gen_model}, and there exists a constant $C_{p,T}>0$, such that
\begin{align*}
\mathbb{E}(\|u(t)\|^p)\leq C_{p,T}\left(1+\mathbb{E}\|u(0)\|^p\right),\quad t\in[0,T].
\end{align*} 
\end{theorem}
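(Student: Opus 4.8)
The plan is to reduce the statement to the standard existence-and-uniqueness theorem for semilinear stochastic evolution equations with globally Lipschitz coefficients in a separable Hilbert space, namely \cite[Theorem 7.5]{DaPrZa:2nd}. Equation \eqref{eq:gen_model} has the abstract form $du(t) = (-\alpha u(t) + KF(u(t)))\,dt + B(u(t))\,dW(t)$ driven by the $C_0$-contraction semigroup $S(t)=e^{-\alpha t}$, whose generator $u\mapsto-\alpha u$ is bounded on all of $H$; hence, by \cite[Theorem 6.5]{DaPrZa:2nd} and the remark following the definition of solutions, a mild solution is automatically a strong solution. It therefore suffices to verify that the drift $KF\colon H\to H$ is globally Lipschitz (and hence of linear growth), and that $B\colon H\to L_2(V,H)$ is globally Lipschitz (and hence of linear growth). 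The latter is exactly Assumption \ref{assu:noise}. For the former, Lemma \ref{lem:nemytskii_lipschitz} gives that $F$ is Lipschitz on $H=L^2(U,\rho)$ with $\Lip(F)=\Lip(f)$ and with linear growth, since $\rho\in L^1(U)$ in each of the three cases of Assumption \ref{assu:space} ($\rho\equiv1$ on a bounded domain in Case (i), and $\rho\in L^1$ by hypothesis in Cases (ii), (iii)); thus $\Lip(KF)\le\|K\|_{L(H)}\Lip(f)$ and $KF$ is of linear growth, provided $K\in L(H)$. So the only structural point requiring work is the boundedness of $K$ on $H$ in each case.

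For the boundedness of $K$ I would argue case by case. In Case (i), $\rho\equiv1$ and $w\in L^2(U\times U)$, so $K$ is Hilbert--Schmidt on $L^2(U)$ with $\|K\|_{L(H)}\le\|w\|_{L^2(U\times U)}$. In Case (ii), $(Ku)(x)=\int_{\R^d}J(x-y)u(y)\,dy$ is a convolution, so Theorem \ref{thm:muckenhoupt_majorant} applied to $|u|\in L^1_{\loc}(\R^d)$ with the radially decreasing majorant $J_0$ of $J$ yields the pointwise estimate $|(Ku)(x)|\le\|J_0\|_{L^1(\R^d)}\,\Mfrak(u)(x)$, and then Theorem \ref{thm:muckenhoupt_maximal} (applicable since $\rho\in A_2$) gives $\|Ku\|_{L^2(\R^d,\rho)}\le C(d)[\rho]_{A_2}\|J_0\|_{L^1(\R^d)}\|u\|_{L^2(\R^d,\rho)}$. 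In Case (iii), conjugation by multiplication with $\rho^{1/2}$ transforms $K$ on $L^2(U,\rho)$ into the integral operator $\widetilde K$ on $L^2(U,dx)$ with kernel $(x,y)\mapsto\rho(x)^{1/2}w(x,y)\rho(y)^{-1/2}$, and Tonelli's theorem gives $\|\widetilde K\|_{L_2(L^2(U))}^2=\int_U\rho(y)^{-1}\bigl(\int_U|w(x,y)|^2\rho(x)\,dx\bigr)dy\le\bigl(\sup_{y\in U}\int_U|w(x,y)|^2\rho(x)\,dx\bigr)\,\|\rho^{-1}\|_{L^1(U)}<\infty$, so that $K$ is bounded on $H$ with the corresponding norm bound.

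With $KF$ and $B$ now known to be globally Lipschitz and of linear growth, \cite[Theorem 7.5]{DaPrZa:2nd} provides, for every $p\ge2$ and every $u(0)\in L^p(\Omega,\mathcal{F}_0,\P;H)$, a unique mild solution $u$ on $[0,T]$ admitting a $\P$-a.s.\ continuous and $\{\mathcal{F}_t\}$-adapted modification, together with the moment bound $\sup_{t\in[0,T]}\E(\|u(t)\|^p)\le C_{p,T}(1+\E\|u(0)\|^p)$; by the reduction in the first paragraph this mild solution is the asserted strong solution. Alternatively, the moment bound can be obtained directly by applying the Itô formula to $t\mapsto\|u(t)\|^p$, estimating the drift contribution through the linear growth of $KF$, controlling the stochastic integral by the Burkholder--Davis--Gundy inequality together with the linear growth of $B$, and closing the estimate with Gronwall's lemma.

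The main obstacle is the verification that $K\in L(H)$ in Case (ii): unlike Cases (i) and (iii), where $K$ is directly Hilbert--Schmidt, this case genuinely needs the Muckenhoupt-weight machinery of Theorems \ref{thm:muckenhoupt_maximal} and \ref{thm:muckenhoupt_majorant} and the hypothesis that $J$ possesses a radially decreasing $L^1$-majorant. Once boundedness of $K$ is established in all three cases, the remainder is a routine invocation of the cited general infinite-dimensional theory, and this is precisely why part (i) is, as remarked, essentially already contained in \cite{KuehnRiedler,FaugerasInglis}.
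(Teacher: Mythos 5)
Your proposal is correct and follows essentially the same route as the paper: reduction to \cite[Theorem 7.5]{DaPrZa:2nd} after verifying that $KF$ is globally Lipschitz and of linear growth via the boundedness of $K$ on $H$ (Hilbert--Schmidt in case (i), the Muckenhoupt maximal-function estimate in case (ii), a weighted Cauchy--Schwarz bound in case (iii)) together with Lemma \ref{lem:nemytskii_lipschitz}. Your case (iii) argument by conjugation with $\rho^{1/2}$ is a cosmetic variant of the paper's direct Cauchy--Schwarz computation and yields the same bound $\bigl(\sup_{y\in U}\int_U|w(x,y)|^2\rho(x)\,dx\bigr)\|\rho^{-1}\|_{L^1(U)}$.
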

\begin{proof}
We divide the proof into three parts, corresponding to each of the cases given in Assumption \ref{assu:space}. The main argument is to show that the conditions listed in \cite[Theorem 7.5]{DaPrZa:2nd} are satisfied by the drift and noise operators. More precisely, it must be shown that
\begin{enumerate}[(a)]
\item There exists a constant $C>0$ such that
\[\|K F(u)-KF(v)\|\le C\|u-v\|\]
for every $u,v\in H$ and
\[\|KF(u)\|\le C(1+\|u\|)\]
for every $u\in H$.
\item There exists $s_0\in L^2_{\loc}([0,\infty))$ such that
\[\|S(t)B(u)\|_{L_2(V,H)}\le s_0(t)(1+\|u\|)\]
for every $t>0$ and every $u\in H$, and
\[\|S(t)B(u)-S(t)B(v)\|_{L_2(V,H)}\le s_0(t)\|u-v\|\]
for every $t>0$ and every $u,v\in H$.
\item There exists $\beta\in (0,\frac{1}{2})$ such that
\[\int_0^1 t^{-2\beta}s_0(t)\,dt<\infty.\]
\end{enumerate}
\textbf{Proof of (a):}\\
\textbf{Case (i):}\\
Recall that by assumption,
\[\kappa:=\int_U\int_U |w(x,y)|^2\,dx\,dy<\infty.\]
Let $u\in H$. By the Cauchy-Schwarz inequality in $L^2(U)$, 
\begin{align*}
\|Ku\|^2 &= \int_U\left(\int_U w(x,y)u(y)\, dy\right)^2\,dx\\
\leq&\int_U\left(\int_U |w(x,y)|^2\,dy\right)\left(\int_U |u(y)|^2\,dy\right)\,dx\\
\leq&\kappa\|u\|^2.
\end{align*}
As a consequence,
\begin{align*}
\|KF(u)\| \leq \sqrt{\kappa}\|F(u)\|,\quad u\in H,
\end{align*}
and
\begin{align*}
\|KF(u)-KF(v)\| &\le \sqrt{\kappa}\|F(u)-F(v)\|,\quad u,v\in H.
\end{align*}
The proof of (a) in the case (i) is concluded by Lemma \ref{lem:nemytskii_lipschitz}.\\
\textbf{Case (ii):}\\
Here $U=\R^d$ and $\rho\in A_2\cap L^2(\R^d)$. In case (ii) assume that the neural kernel is given in the form of a classical convolution, that is,
\begin{align*}
w(x,y) = J(x-y),\quad x,y\in\R^d.
\end{align*}
In general, $J(x)\neq J(-x)$, but $J$ is assumed to have an integrable radially symmetric majorant $J_0\in L^2(\R^d)$. Observe that
\begin{align*}
Ku = u\ast J= \int_{\R^d}J(\cdot-y)u(y)\, dy. 
\end{align*}
Now, for $u\in H$, we combine Theorems \ref{thm:muckenhoupt_maximal} and \ref{thm:muckenhoupt_majorant} to obtain
\begin{align*}
\|u\ast J\|& \leq \|J_0\|_{L^1(\R^d)} \|\mathfrak{M}(u)\|\leq C(d)[\rho]_{A_2}\|J_0\|_{L^1(\R^d)}\|u\|.
\end{align*}
Denote
\begin{align*}
K_{\rho}:= C(d)[\rho]_{A_2}\|J_0\|_{L^1(\R^d)}.
\end{align*}
As a consequence,
\begin{align*}
\|KF(u)\| \leq K_{\rho}\|F(u)\|,\quad u\in H,
\end{align*}
and
\begin{align*}
\|KF(u)-KF(v)\| &\le K_{\rho}\|F(u)-F(v)\|,\quad u,v\in H.
\end{align*}
The proof of (a) in the case (ii) is concluded by Lemma \ref{lem:nemytskii_lipschitz}. Hence (a) follows.\\
\textbf{Case (iii):}\\
Recall that by assumption,
\[\Lambda:=\int_U\sup_{y\in U} |w(x,y)|^2\rho(x)\,dx<\infty.\]
Let $u\in H$, by the Cauchy-Schwarz inequality in $L^2(U,\rho)$,
\begin{align*}
\|Ku\|^2 &= \int_U\left|\int_U w(x,y)u(y)\, dy\right|^2\rho(x)\, dx\\
\leq& \int_U\left(\int_U|w(x,y)||u(y)|\frac{\rho(y)}{\rho(y)}\, dy\right)^2\rho(x)\,dx\\
\leq& \int_U\left(\int_U|w(x,y)|^2|u(y)|^2\rho(y)\, dy\right)\left(\int_U\frac{1}{\rho(y)}\,dy\right)\rho(x)\,dx\\
\leq& \Lambda\|u\|^2\|\rho^{-1}\|_{L^1(U)}.
\end{align*}
Let
\[K_{\Lambda,\rho}:=\Lambda\|\rho^{-1}\|_{L^1(U)}.\]
As a consequence,
\begin{align*}
\|KF(u)\| \leq K_{\Lambda,\rho}\|F(u)\|,
\end{align*}
and
\begin{align*}
\|KF(u)-KF(v)\| &\le K_{\Lambda,\rho}\|F(u)-F(v)\|
\end{align*}
for every $u,v\in H$. The proof of (a) in the case (iii) is concluded by Lemma \ref{lem:nemytskii_lipschitz}.\\
\textbf{Proof of (b) and (c):}\\
It follows directly from assumption \ref{assu:noise} that (b) is satisfied with $s_0(t):=e^{-\alpha t}$. For $\beta:=\frac{1}{4}$, we obtain that
\[\int_0^1 t^{-2\beta}s_0(t)\,dt\le \int_0^1 t^{-\frac{1}{2}}\,dt=2.\]
Hence (c) is satisfied.
\end{proof}

\section{Existence and uniqueness of invariant measures}\label{sec:4}
In the present section, we consider either case (i)--(iii) from Assumption \ref{assu:space}.
Now, we shall employ the subspace $H_1$ defined in Subsection \ref{subsec:subspace}.

\subsection{An invariant subspace}\label{subsec:4.1}

Before we prove the existence of an invariant measure, we present a result for the solution to be invariant under the subspace $H_1$.

\begin{assumption}\label{assu:inv}
Suppose that there exist constants $\tilde{C}_B,\tilde{\tilde{C}}_B\ge 0$ such that
\begin{align*}
\|B(u)\|_{L_2(V,H_1)} \leq \tilde{C}_B\|u\|_1 + \tilde{\tilde{C}}_{B},\quad u\in H_1,
\end{align*}
and that there exists a constant $C_{\check{K}}\ge 0$ such that
\begin{align*}
\|(\pm\hat{K})^{-1}\check{K}v\|\leq C_{\check{K}}\|v\|,\quad v\in H.
\end{align*}
Set
\[\beta:=\sqrt{2}(1+C_{\check{K}})\|(\pm\hat{K})^{-\frac{1}{2}}\|_{L(H)}^2 \left(\Lip(f) + |f(0)|\|\rho\|_{L^1(U)}^{1/2}\right)+\tilde{C}_B,\]
and assume that
\[2\alpha-\beta>0.\]
\end{assumption}

Note that $\tilde{C}_B=0$ for additive noise.
The second assumption can interpreted as the domination of the anti-symmetric part $\check{K}$ of $K$ by the symmetric part $\hat{K}$ of $K$.
Note that $C_{\check{K}}=0$ if $w$ is symmetric.

\begin{proposition}[An invariance result]\label{prop:inv}
Suppose that
Assumptions \ref{assu:space}, \ref{assu:noise}, \ref{assu:hat}, and \ref{assu:inv} hold.
Set
\[\beta:=\sqrt{2}(1+C_{\check{K}})\|(\pm\hat{K})^{-\frac{1}{2}}\|_{L(H)}^2 \left(\Lip(f) + |f(0)|\|\rho\|_{L^1(U)}^{1/2}\right)+\tilde{C}_B.\]
and assume that for some $\delta\in (0,1)$,
\[\gamma(\delta):=2\alpha-\beta-\frac{9}{\delta}\tilde{\tilde{C}}_B>0.\]
Then, for
\[\eta:=\sqrt{2}|f(0)|\|\rho\|_{L^1(U)}^{1/2}+\left(1+\frac{9}{\delta}\right)\tilde{\tilde{C}}_B,\]
and for any $u_0\in H_1$, and any $t\in [0,T]$, it holds that,
\begin{align*}
(1-\delta)\mathbb{E}\left(\sup_{0\leq s\leq t}\|u(s)\|^2_1\right)+\gamma(\delta)\E\int_0^t\|u(s)\|_1^2\,ds \leq \|u_0\|^2_1 + \eta t,
\end{align*}
where $\gamma(\delta)>0$ is as in Assumption \ref{assu:inv}.
In particular, for $u_0\in H_1$, $u\in L^2(\Omega;L^{\infty}([0,T];H_1))\cap L^2([0,T]\times\Omega;H_1)$. 
\end{proposition}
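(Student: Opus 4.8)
The plan is to first verify that the $H$-valued solution $u$ furnished by Theorem~\ref{thm:existence_all_cases} in fact takes values in the closed subspace $H_1$ whenever $u_0\in H_1$, and then to obtain the asserted energy estimate by applying It\^o's formula to $t\mapsto\|u(t)\|_1^2$ in the Hilbert space $(H_1,\langle\cdot,\cdot\rangle_1)$, controlling the stochastic term by Burkholder--Davis--Gundy.

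\emph{Step 1: invariance of $H_1$.} Let $Q:=\mathrm{id}-P_1$ be the orthogonal projection of $H$ onto $\ker(\pm\hat K)=H_1^\perp$. Since $\hat K$ has range in $H_1$ and, by Assumption~\ref{assu:inv}, $\check K$ maps $H$ into the domain of the pseudoinverse $(\pm\hat K)^{-1}$ (which is contained in $H_1$), we have $QKF\equiv 0$. Applying $Q$ to the mild formulation and using that $Q$ commutes with $S(t)=e^{-\alpha t}\mathrm{id}$ and that $Qu_0=0$ gives $Qu(t)=\int_0^t e^{-\alpha(t-s)}QB(u(s))\,dW(s)$. As $QB(w)=0$ for $w\in H_1$ (Assumption~\ref{assu:inv}), Assumption~\ref{assu:noise} yields $\|QB(u(s))\|_{L_2(V,H)}=\|Q(B(u(s))-B(P_1u(s)))\|_{L_2(V,H)}\le C_B\|Qu(s)\|$, so by the It\^o isometry $\E\|Qu(t)\|^2\le C_B^2\int_0^t\E\|Qu(s)\|^2\,ds$; since $s\mapsto\E\|Qu(s)\|^2$ is bounded on $[0,T]$ by Theorem~\ref{thm:main}(i), Gronwall's lemma forces $Qu\equiv 0$. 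Hence $u$ is $H_1$-valued, every term in \eqref{eq:gen_model} is $H_1$-valued, and $u$ solves the equation in $(H_1,\langle\cdot,\cdot\rangle_1)$.

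\emph{Step 2: It\^o's formula and the drift estimate.} After a standard localization that replaces $(\pm\hat K)^{-1/2}$ by its bounded spectral truncations, derives the estimate with constants independent of the truncation level, and passes to the limit by monotone convergence, It\^o's formula for the square norm in $(H_1,\langle\cdot,\cdot\rangle_1)$ gives
\begin{align*}
\|u(t)\|_1^2&=\|u_0\|_1^2+\int_0^t\Bigl(-2\alpha\|u(s)\|_1^2+2\langle u(s),KF(u(s))\rangle_1+\|B(u(s))\|_{L_2(V,H_1)}^2\Bigr)ds\\
&\qquad+2\int_0^t\langle u(s),B(u(s))\,dW(s)\rangle_1 .
\end{align*}
The key pointwise bound: writing $K=\hat K+\check K$ and using $(\pm\hat K)^{-1/2}\hat K=\pm(\pm\hat K)^{1/2}$ one computes, for $v\in H_1$, $\langle v,\hat KF(v)\rangle_1=\pm\langle v,F(v)\rangle$ and $\langle v,\check KF(v)\rangle_1=\langle v,(\pm\hat K)^{-1}\check KF(v)\rangle$, whence by the $\check K$-domination in Assumption~\ref{assu:inv},
\[|\langle v,KF(v)\rangle_1|\le(1+C_{\check K})\,\|v\|\,\|F(v)\| .\]
Estimating $\|v\|$ through \eqref{eq:H_1-norm-bound}, $\|F(v)\|$ through Lemma~\ref{lem:nemytskii_lipschitz} (with $\zeta=C(\zeta)=1$), and $\|B(v)\|_{L_2(V,H_1)}$ through Assumption~\ref{assu:inv}, Young's inequality bounds the $ds$-integrand by $-(2\alpha-\beta)\|u(s)\|_1^2$ plus affine-in-$\|u(s)\|_1$ terms that will form $\eta$ together with the multiple of $\tilde{\tilde{C}}_B$ to be absorbed in the next step.

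\emph{Step 3: supremum bound and conclusion.} Taking the supremum over $s\in[0,t]$ and then expectations, the martingale $M(s):=\int_0^s\langle u(r),B(u(r))\,dW(r)\rangle_1$ satisfies $\langle M\rangle_t\le(\sup_{r\le t}\|u(r)\|_1^2)\int_0^t\|B(u(r))\|_{L_2(V,H_1)}^2\,dr$, so Burkholder--Davis--Gundy (with constant $3$) followed by Young's inequality with weight $\delta/3$ gives
\[2\,\E\sup_{s\le t}|M(s)|\le\delta\,\E\sup_{r\le t}\|u(r)\|_1^2+\frac9\delta\,\E\int_0^t\|B(u(r))\|_{L_2(V,H_1)}^2\,dr ,\]
which is exactly the origin of the factors $(1-\delta)$ and $9/\delta$ in the statement. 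Moving the $\delta$-term to the left, bounding the remaining $\|B(u(r))\|_{L_2(V,H_1)}^2$-contribution via Assumption~\ref{assu:inv}, and collecting all $\|u(s)\|_1^2$-terms (total coefficient $-\gamma(\delta)<0$ by hypothesis) into the time integral while the constants accumulate to $\eta t$ yields the claimed inequality; no Gronwall step is needed since the bound is affine in $t$. Taking $t=T$ gives $u\in L^2(\Omega;L^\infty([0,T];H_1))\cap L^2([0,T]\times\Omega;H_1)$. The main delicate points are justifying It\^o's formula for the unbounded nonlocal norm via the truncation argument, and tracking the constants so that the contributions of the $\check K$-domination, \eqref{eq:H_1-norm-bound}, Lemma~\ref{lem:nemytskii_lipschitz}, Young's inequality and BDG combine precisely into $\beta$, $\gamma(\delta)$ and $\eta$.
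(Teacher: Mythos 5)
Your proof is correct and, for the main estimate, follows the same route as the paper: It\^o's formula for $\|\cdot\|_1^2$, the decomposition $\langle KF(v),v\rangle_1=\pm\langle F(v),v\rangle+\langle(\pm\hat K)^{-1}\check K F(v),v\rangle$ controlled via the $\check K$-domination, \eqref{eq:H_1-norm-bound} and Lemma \ref{lem:nemytskii_lipschitz}, and a Burkholder--Davis--Gundy/Young split with weight $\delta/3$ producing exactly the $(1-\delta)$ and $9/\delta$ factors. What you add, and what the paper's proof omits, is your Step 1: the paper applies It\^o's formula to the nonlocal norm without first verifying that a solution started in $H_1$ remains in $H_1$, and without justifying the formula for the unbounded functional $\|\cdot\|_1^2$; your projection-plus-Gr\"onwall argument on the mild formulation (using that $K$ maps into $H_1$ and that $B$ restricted to $H_1$ takes values in $L_2(V,H_1)$ by Assumption \ref{assu:inv}), together with the spectral-truncation remark, closes these gaps cleanly. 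One bookkeeping caveat: tracking the constants exactly as you describe, the $\tfrac{9}{\delta}$-contribution absorbed into the coefficient of $\E\int_0^t\|u(s)\|_1^2\,ds$ is $\tfrac{9}{\delta}\tilde C_B$ (coming from $\|B(u)\|^2_{L_2(V,H_1)}\le\tilde C_B\|u\|_1^2+\tilde{\tilde C}_B$), while the $\tfrac{9}{\delta}\tilde{\tilde C}_B$ piece lands in the constant $\eta t$; the statement's $\gamma(\delta)$ has $\tilde{\tilde C}_B$ where this computation produces $\tilde C_B$, a discrepancy inherited from the paper itself, so your claim that the constants ``combine precisely'' into $\gamma(\delta)$ should be read modulo that correction.
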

\begin{proof}
Applying the It\^o formula to the functional $u\mapsto\|u\|^2_1$ yields
\begin{align*}
\|u(t)\|_1^2 = \|u_0\|_1^2 &+ 2\int_0^t\langle -\alpha u(s) + KF(u(s)),u(s)\rangle_1\, ds\\
&+2\int_0^t\langle u(s),B(u(s))\, dW(s)\rangle_1\\
&+\int_0^t\operatorname{Tr}_{H_1}(B(u(s)),B^*(u(s)))\, ds.
\end{align*}
Taking the supremum over finite time $[0,T]$ and averaging gives
\begin{align*}
\mathbb{E}\left(\sup_{0\leq s\leq T}\|u(s)\|_1^2\right)\leq&\|u_0\|_1^2
+ 2\mathbb{E}\left(\sup_{0\leq t\leq T}\int_0^t\langle -\alpha u(s)+KF(u(s)),u(s)\rangle_1\, ds\right)\\
&+ 2\mathbb{E}\left(\sup_{0\leq t\leq T}\left|\int_0^t\langle u(s),B(u(s))\, dW(s)\rangle_1\right|\right)\\
&+ \mathbb{E}\left(\sup_{0\leq t\leq T}\int_0^t\operatorname{Tr}_{H_1}(B(u(s)),B^*(u(s)))\, ds\right).
\end{align*}
We handle each term separately. Define a sequence of stopping times 
\begin{align*}
\tau_N:=\inf\{s\in[0,T]:\|u(s)\|_1^2>N\}\wedge T,\quad N\in\N.
\end{align*}
Let $\{\hat{e}_i\}$ be a complete orthonormal system of $H_1$. First, observe that
\begin{align*}
\operatorname{Tr}_{H_1}(B(u(s)),B^*(u(s))) &= \sum_{j=1}^\infty\langle(B(u(s)),B^*(u(s))),\hat{e}_j\rangle_1\hat{e}_j\\
&= \sum_{j=1}^\infty\langle\hat{K}^{-1/2}(B(u(s)),B^*(u(s))),\hat{K}^{-1/2}\hat{e}_j\rangle \hat{e}_j.
\end{align*}
Moreover, 
\begin{align*}
\|B(u(s))\|^2_{L_2(V,H_1)} &= \sum_{i=1}^\infty\|B(u(s))\hat{e}_i\|_{1}\\
&=\sum_{i=1}^\infty\langle B(u(s))\hat{e}_i,B(u(s))\hat{e}_i\rangle_1\\
&=\sum_{i=1}^\infty\langle(\pm\hat{K})^{-1/2}B(u(s))\hat{e}_i,(\pm\hat{K})^{-1/2}B(u(s))\hat{e}_i\rangle.
\end{align*}
Therefore, we may conclude 
\begin{align*}
\operatorname{Tr}_{H_1}(B(u(s)),B^*(u(s))) \leq \|B(u(s))\|^2_{L_2(V,H_1)},
\end{align*}
and by the assumption we have
\begin{align*}
\operatorname{Tr}_{H_1}(B(u(s)),B^*(u(s))) \leq \|B(u(s))\|^2_{L_2(V,H_1)}\leq \tilde{C}_B\|u(s)\|^2_1 + \tilde{\tilde{C}}_B.
\end{align*}
Thus,
\begin{align*}
&\mathbb{E}\left(\sup_{0\leq r\leq t\wedge\tau_N}\int_0^r\operatorname{Tr}_{H_1}(B(u(s)),B^*(u(s)))\, ds\right)\leq \mathbb{E}\left(\sup_{0\leq r\leq t\wedge\tau_N}\int_0^r\|B(u(s))\|_{L_2(V,H_1)}^2\, ds\right)\\
\leq& \tilde{C}_B\mathbb{E}\left(\sup_{0\leq r\leq t\wedge\tau_N}\int_0^r\|u(s)\|_1^2\, ds\right) + \tilde{\tilde{C}}_B(t\wedge\tau_N).
\end{align*}
Next, we bound the stochastic integral term by means of a Burkholder-Davis-Gundy type inequality for $p=1$, see \cite{I:86}. Let $\eps>0$.
\begin{align*}
&2\mathbb{E}\left(\sup_{0\leq r\leq t\wedge\tau_N}\left|\int_0^{r}\langle u(s),B(u(s))\, dW(s)\rangle_1\right|\right) \\
\leq& 6\mathbb{E}\left[\left(\int_0^{t\wedge\tau_N}\|u(s)\|_1^2 \|B(u(s))\|_{L_2(V,H_1)}^2\, ds \right)^{1/2}\right]\\
\leq& 6\mathbb{E}\left[\left(\sup_{0\leq s\leq t\wedge\tau_N}\|u(s)\|_1^2\int_0^{t\wedge\tau_N}\|B(u(s))\|^2_{L_2(V,H_1)}\, ds\right)^{1/2}\right]\\
\leq& 6\mathbb{E}\left[\left(\sup_{0\leq s\leq t\wedge\tau_N}\|u(s)\|_1^2\int_0^{t\wedge\tau_N}(\tilde{C}_B\|u(s)\|_1^2 + \tilde{\tilde{C}}_B)\, ds\right)^{1/2}\right]\\
\leq& 6\mathbb{E}\left[\left(\sup_{0\leq s\leq t\wedge\tau_N}\varepsilon\|u(s)\|_1^2\frac{1}{\varepsilon}\int_0^{t\wedge\tau_N}(\tilde{C}_B\|u(s)\|_1^2 + \tilde{\tilde{C}}_B)\, ds\right)^{1/2}\right]\\
\leq& 3\mathbb{E}\left(\varepsilon\sup_{0\leq s\leq t\wedge\tau_N}\|u(s)\|_1^2\right)
 + 3\mathbb{E}\left(\frac{1}{\varepsilon}\int_0^{t\wedge\tau_N}(\tilde{C}_B\|u(s)\|_1 + \tilde{\tilde{C}}_B)\, ds\right)\\
&= 3\varepsilon\mathbb{E}\left(\sup_{0\leq s\leq t\wedge\tau_N}\|u(s)\|_1^2\right) + \frac{3\tilde{C}_B}{\varepsilon}\mathbb{E}\int_0^{t\wedge\tau_N}\|u(s)\|_1^2\, ds + \frac{3\tilde{\tilde{C}}_B(t\wedge\tau_N)}{\varepsilon},
\end{align*}
where we have used Young's inequality.

Let us deal with the term involving the operator $K$,
\begin{align*}
&2\mathbb{E}\left(\sup_{0\leq r\leq t\wedge\tau_N}\int_0^{r}\langle -\alpha u(s) + KF(u(s)),u(s)\rangle_1\, ds\right)\\
&= -2\alpha\mathbb{E}\left(\int_0^{t\wedge\tau_N}\|u(s)\|_1^2\, ds\right)
+ 2\mathbb{E}\left(\int_0^{t\wedge\tau_N}\langle KF(u(s)),u(s)\rangle_1\, ds\right).
\end{align*}
We only focus on the second term on the right-hand side, which can be written 
\begin{align*}
&2\mathbb{E}\left(\int_0^{t\wedge\tau_N}\left\langle \frac{1}{2}(\hat{K}+\check{K})F(u(s)),u(s)\right\rangle_1\, ds\right) = \mathbb{E}\left(\int_0^{t\wedge\tau_N}\langle \hat{K}^{-1}(\hat{K}+\check{K})F(u(s)),u(s)\rangle\, ds\right)\\
&= \mathbb{E}\left(\int_0^{t\wedge\tau_N}[\langle\pm F(u(s)),u(s)\rangle + \langle (\pm\hat{K})^{-1}\check{K}F(u(s)),u(s)\rangle]\, ds\right).
\end{align*}
The first term inside the integral can be bounded using the Lipschitz assumption on $f$ together with \eqref{eq:H_1-norm-bound}, Lemma \ref{lem:nemytskii_lipschitz}, and the inequality $x\leq x^2+1$:
\begin{align*}
&\langle \pm F(u(s)),u(s) \rangle \leq |\langle F(u(s)),u(s) \rangle|\leq \|F(u(s))\|  \|u(s)\|\\
\leq& \|(\pm\hat{K})^{-\frac{1}{2}}\|_{L(H)} \|F(u(s))\| \|u(s)\|_1\\
\leq& \|(\pm\hat{K})^{-\frac{1}{2}}\|_{L(H)} \|u(s)\|_1\left(\sqrt{2}\Lip(f)\|u(s)\| + \sqrt{2}|f(0)|\|\rho\|_{L^1(U)}^{1/2}\right)\\
\leq& \sqrt{2}\|(\pm\hat{K})^{-\frac{1}{2}}\|^2_{L(H)} \|u(s)\|_1^2\Lip(f) + \sqrt{2}|f(0)|\|\rho\|_{L^1(U)}^{1/2}\left(\|(\pm\hat{K})^{-\frac{1}{2}}\|^2_{L(H)} \|u(s)\|_1^2+1 \right)\\
=& \sqrt{2}\|(\pm\hat{K})^{-\frac{1}{2}}\|_{L(H)}^2 \|u(s)\|_1^2\left(\Lip(f) + |f(0)|\|\rho\|_{L^1(U)}^{1/2}\right)+\sqrt{2}|f(0)|\|\rho\|_{L^1(U)}^{1/2}.
\end{align*}
In other words,
\begin{align*}
&\mathbb{E}\int_0^{t\wedge\tau_N}\langle F(u(s)),u(s)\rangle\, ds\\
\leq&  \sqrt{2}\|(\pm\hat{K})^{-\frac{1}{2}}\|_{L(H)}^2 \left(\Lip(f) + |f(0)|\|\rho\|_{L^1(U)}^{1/2}\right)\mathbb{E}\int_0^{t\wedge\tau_N}\|u(s)\|_1^2\, ds\\
&+\sqrt{2}|f(0)|\|\rho\|_{L^1(U)}^{1/2}(t\wedge\tau_N).
\end{align*}
Let us handle the second integral term, where we use the assumption on $\check{K}$, together with \eqref{eq:H_1-norm-bound}, Lemma \ref{lem:nemytskii_lipschitz},
\begin{align*}
&\langle (\pm\hat{K})^{-1}\check{K}F(u(s)),u(s)\rangle\\
\leq& \|(\pm\hat{K})^{-1}\check{K}F(u(s))\| \|u(s)\|\\
\leq& C_{\check{K}}\|(\pm\hat{K})^{-\frac{1}{2}}\|_{L(H)}\|F(u(s))\| \|u(s)\|_1\\
\leq& C_{\check{K}}\|(\pm\hat{K})^{-\frac{1}{2}}\|_{L(H)} \|u(s)\|_1\left(\sqrt{2}\Lip(f)\|u(s)\| + \sqrt{2}|f(0)|\|\rho\|_{L^1(U)}^{1/2}\right)\\
\leq& \sqrt{2}C_{\check{K}}\|(\pm\hat{K})^{-\frac{1}{2}}\|^2_{L(H)} \|u(s)\|_1^2\Lip(f) + \sqrt{2}|f(0)|\|\rho\|_{L^1(U)}^{1/2}\left(\|(\pm\hat{K})^{-\frac{1}{2}}\|^2_{L(H)} \|u(s)\|_1^2+1 \right)\\
=& \sqrt{2}C_{\check{K}}\|(\pm\hat{K})^{-\frac{1}{2}}\|_{L(H)}^2 \|u(s)\|_1^2\left(\Lip(f) + |f(0)|\|\rho\|_{L^1(U)}^{1/2}\right)+\sqrt{2}|f(0)|\|\rho\|_{L^1(U)}^{1/2}.
\end{align*}
We have obtained
\begin{align*}
&2\mathbb{E}\int_0^{t\wedge\tau_N}\langle KF(u(s)),u(s)\rangle_1\, ds\\
\leq&  \sqrt{2}(1+C_{\check{K}})\|(\pm\hat{K})^{-\frac{1}{2}}\|_{L(H)}^2 \left(\Lip(f) + |f(0)|\|\rho\|_{L^1(U)}^{1/2}\right)\mathbb{E}\int_0^{t\wedge\tau_N}\|u(s)\|_1^2\, ds\\
&+\sqrt{2}(1+C_{\check{K}})|f(0)|\|\rho\|_{L^1(U)}^{1/2}(t\wedge\tau_N).
\end{align*}
For simplicity, set
\[\tilde{\eta}:=\sqrt{2}|f(0)|\|\rho\|_{L^1(U)}^{1/2},\]
and
\[\beta:=\sqrt{2}(1+C_{\check{K}})\|(\pm\hat{K})^{-\frac{1}{2}}\|_{L(H)}^2 \left(\Lip(f) + |f(0)|\|\rho\|_{L^1(U)}^{1/2}\right)+\tilde{C}_B.\]
Collecting all the terms gives
\begin{align*}
&(1-3\varepsilon)\mathbb{E}\left(\sup_{0\leq s\leq t\wedge\tau_N}\|u(s)\|_1^2\right)
+ \left(2\alpha-\beta-\frac{3\tilde{\tilde{C}}_B}{\eps}\right) \mathbb{E}\int_0^{t\wedge\tau_N}\|u(s)\|_1^2\, ds\\
\leq& \|u_0\|_1^2 
 + \left[\tilde{\tilde{C}}_B + \frac{3\tilde{\tilde{C}}_B}{\varepsilon} + \tilde{\eta}\right](t\wedge\tau_N)
\end{align*}
We obtain for $\eps:=\frac{\delta}{3}$, for some $\delta\in (0,1)$,
\begin{align*}
&(1-\delta)\mathbb{E}\left(\sup_{0\leq s\leq t\wedge\tau_N}\|u(s)\|_1^2\right)
+ \left(2\alpha-\beta-\frac{9}{\delta}\tilde{\tilde{C}}_B\right) \mathbb{E}\int_0^{t\wedge\tau_N}\|u(s)\|_1^2\, ds\\
\leq& \|u_0\|_1^2 
 + \left[\left(1+\frac{9}{\delta}\right)\tilde{\tilde{C}}_B + \tilde{\eta}\right](t\wedge\tau_N)
\end{align*}
By the convergence of the stopping times
\begin{align*}
\lim_{N\rightarrow\infty}\tau_N = T,\quad \mathbb{P}\text{-a.s.},
\end{align*}
by the monotone convergence theorem and by Fatou's lemma,
\begin{align*}
&(1-\delta)\mathbb{E}\left(\sup_{0\leq s\leq t}\|u(s)\|_1^2\right)
+ \left(2\alpha-\beta-\frac{9}{\delta}\tilde{\tilde{C}}_B\right) \mathbb{E}\int_0^t\|u(s)\|_1^2\, ds\\
\leq& \|u_0\|_1^2 
 + \left[\left(1+\frac{9}{\delta}\right)\tilde{\tilde{C}}_B + \tilde{\eta}\right]t.
\end{align*}
\end{proof}

\begin{corollary}\label{cor:inv}
Suppose that
Assumptions \ref{assu:space}, \ref{assu:noise}, \ref{assu:hat}, and \ref{assu:inv} hold.
Then, for
\[\eta:=\sqrt{2}|f(0)|\|\rho\|_{L^1(U)}^{1/2}+\tilde{\tilde{C}}_B,\]
and for any $u_0\in H_1$, and any $t\in [0,T]$, it holds that,
\begin{align*}
\operatorname{ess\,sup}\displaylimits_{0\leq s\leq t}\mathbb{E}\left(\|u(s)\|^2_1\right)+(2\alpha-\beta)\E\int_0^t\|u(s)\|_1^2\,ds \leq \|u_0\|^2_1 + \eta t,
\end{align*}
where $\beta\ge 0$ is as in Assumption \ref{assu:inv}.
In particular, for $u_0\in H_1$, $u\in L^\infty([0,T];L^2(\Omega;H_1))\cap L^2([0,T]\times\Omega;H_1)$. 
\end{corollary}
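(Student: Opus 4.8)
The plan is to repeat the It\^o computation from the proof of Proposition \ref{prop:inv}, but to take expectations \emph{before} passing to the supremum in time. Taking $\E$ first makes the stochastic integral disappear (once localized it is a genuine martingale), so no Burkholder--Davis--Gundy estimate is needed; this is exactly why the coefficient in front of the time integral comes out as the undiminished $2\alpha-\beta$ rather than the $\delta$-degraded $\gamma(\delta)$ of Proposition \ref{prop:inv}.

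Concretely, I would use the same localizing stopping times $\tau_N:=\inf\{s\in[0,T]:\|u(s)\|_1^2>N\}\wedge T$ as before. Applying It\^o's formula to $u\mapsto\|u\|_1^2$, stopping at $t\wedge\tau_N$ and taking expectations, one finds that the martingale term drops out (its predictable quadratic variation is $\P$-integrable on $[0,t\wedge\tau_N]$, since both $\|u(s)\|_1$ and $\|B(u(s))\|_{L_2(V,H_1)}$ are bounded there by Assumption \ref{assu:inv}), leaving the $-2\alpha\|u\|_1^2$ contribution together with the $K$-term and the trace term. These last two are estimated exactly as in Proposition \ref{prop:inv}: the trace term by $\operatorname{Tr}_{H_1}(B(u(s))B^*(u(s)))\le\|B(u(s))\|_{L_2(V,H_1)}^2\le\tilde C_B\|u(s)\|_1^2+\tilde{\tilde C}_B$, and the $K$-term, after splitting $K=\tfrac12(\hat K+\check K)$ and invoking \eqref{eq:H_1-norm-bound}, Lemma \ref{lem:nemytskii_lipschitz} and the bound $\|(\pm\hat K)^{-1}\check Kv\|\le C_{\check K}\|v\|$, by $\beta_0\|u(s)\|_1^2+\tilde\eta$ under the integral, where $\beta_0:=\sqrt2(1+C_{\check K})\|(\pm\hat K)^{-1/2}\|_{L(H)}^2(\Lip(f)+|f(0)|\|\rho\|_{L^1(U)}^{1/2})$ and $\tilde\eta:=\sqrt2|f(0)|\|\rho\|_{L^1(U)}^{1/2}$. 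Since $\beta_0+\tilde C_B=\beta$ and $\tilde\eta+\tilde{\tilde C}_B=\eta$, collecting terms gives the localized master inequality
\begin{align*}
\E\|u(t\wedge\tau_N)\|_1^2 + (2\alpha-\beta)\,\E\!\int_0^{t\wedge\tau_N}\!\|u(s)\|_1^2\,ds \;\le\; \|u_0\|_1^2 + \eta\,(t\wedge\tau_N).
\end{align*}

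Finally I would let $N\to\infty$, using $\tau_N\uparrow T$ $\P$-a.s.: since $2\alpha-\beta\ge0$ and the integrand is nonnegative, monotone convergence handles the time-integral term, while for the pointwise term I would pass to the limit by Fatou's lemma, exploiting that $\{u(s\wedge\tau_N)\}_N$ remains bounded in $H_1$ (Proposition \ref{prop:inv}) and converges to $u(s)$ in $H$, so that $\|u(s)\|_1\le\liminf_N\|u(s\wedge\tau_N)\|_1$ by weak lower semicontinuity of $\|\cdot\|_1$. Running this with $t$ replaced by an arbitrary $s\in[0,t]$ and discarding the nonnegative time-integral term yields $\E\|u(s)\|_1^2\le\|u_0\|_1^2+\eta t$ for every $s\in[0,t]$, hence the bound on $\operatorname{ess\,sup}_{0\le s\le t}\E\|u(s)\|_1^2$; taking $s=t$ and dropping the pointwise term yields the bound on $(2\alpha-\beta)\E\int_0^t\|u(s)\|_1^2\,ds$; the space-time integrability statement then follows at once. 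I expect the main obstacle to be precisely this last interchange of limit and expectation in the pointwise term: the solution paths need not be strongly $H_1$-continuous, so one cannot simply invoke path continuity, and the weak-lower-semicontinuity argument is what makes it go through. Everything else is a routine simplification of Proposition \ref{prop:inv}.
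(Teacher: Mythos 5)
Your proposal is correct and follows essentially the same route as the paper, whose proof of this corollary is exactly the remark that one repeats the It\^o computation of Proposition \ref{prop:inv} but takes expectations before any Burkholder--Davis--Gundy estimate, so that the stopped stochastic integral is a true martingale with vanishing mean and the coefficient $2\alpha-\beta$ survives undiminished. The extra care you take with the passage $N\to\infty$ in the pointwise term (Fatou via lower semicontinuity of $\|\cdot\|_1$) is a detail the paper leaves implicit, not a different method.
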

\begin{proof}
The claim can be proved analogously to Proposition \ref{prop:inv}, but taking the expectation before the application of Burkholder-Davis-Gundy inequality, and using the fact that under our assumptions on $B$,
\[M_t:=\int_0^{t}\langle u(s),B(u(s))\, dW(s)\rangle_1\]
is a local martingale, and hence for any $N\in\N$,
\begin{align*}
\mathbb{E}\left[M_{t\wedge\tau_N}\right] = \mathbb{E}\left[M_0\right] = 0. 
\end{align*}
\end{proof}

\subsection{Existence of invariant measures}\label{subsec:4.2}

We refer to \cite{DPZ:96} for details on Markovian semigroups for stochastic equations in Hilbert space, and for invariant measures of those. Recall that the Markovian semigroup $(P_t)_{t\ge 0}$ associated to~\eqref{eq:gen_model} acts as follows
\[
P_t \varphi(v):=\E[\varphi(u(t,v))]\quad 
\textrm{ for any }\quad \varphi\in \Bcal_b(H)\quad \textrm{ and }\quad v\in H.
\]
 For a semigroup $(P_t)_{t\ge 0}$, we define the dual semigroup $(P_t^\ast)_{t\ge 0}$ acting on $\Mcal_1(H)$ by
\[P_t^\ast \mu(A):=\int_H P_t \mathbbm{1}_A(x)\,\mu(\ud x)\quad \textrm{ for any }\quad A\in\Bcal(H),
\]
where $\mathbbm{1}_A$ denotes the indicator function of the set $A$.
For $v\in H$ and $A\in\Bcal(H)$, $t\ge 0$, set
\[P_t(v,A):=P_t^\ast \delta_v(A)=P_t\mathbbm{1}_A(v),\]
where $\delta_v$ denotes the Dirac delta measure at $v$.
Set also for $T\ge 0$,
\[Q_T(v,A):=\frac{1}{T}\int_0^T P_t(v,A)\,dt.\]

\begin{definition}[Stochastically continuous Feller semigroup]
We say that $(P_t)_{t\ge 0}$ is a stochastically continuous Feller semigroup if for every $v\in H$ and every $r>0$ it follows that
\[\lim_{t\to 0} P_t(v,B(v,r))=1
\quad \textrm{ and }\quad
P_t(C_b(H))\subset C_b(H),
\]
where $B(v,r):=\{z\in H:\|z\|_H<r\}$.
\end{definition}

\begin{lemma}
Suppose that
Assumptions \ref{assu:space}, \ref{assu:noise}, \ref{assu:hat}, and \ref{assu:inv} hold.
The transition semigroup $\{P_t\}_{t\geq 0}$ is a Markovian and stochastically continuous Feller semigroup on $H$.
\end{lemma}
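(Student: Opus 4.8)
The plan is to check, in this order, that $\{P_t\}_{t\ge 0}$ is (a) a Markov transition semigroup, (b) stochastically continuous, and (c) Feller in the sense that $P_t(C_b(H))\subset C_b(H)$. Throughout, the only inputs needed are the well-posedness and the a priori estimates of Theorem \ref{thm:existence_all_cases} together with the Lipschitz bounds on $KF$ and $B$ established there and in Assumption \ref{assu:noise}; the subspace assumptions \ref{assu:hat} and \ref{assu:inv} are not actually used here, but are kept in the hypotheses for uniformity with the later results.

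For (a): since for every $v\in H$ equation \eqref{eq:gen_model} has, by Theorem \ref{thm:existence_all_cases}, a unique strong (hence mild) solution with time-homogeneous coefficients, the solution map $v\mapsto u(\cdot,v)$ is a stochastic flow. Uniqueness and the independence of the increments of $W$ give the flow/cocycle identity $u(t+s,v)=u\big(t,u(s,v)\big)$, where on the right $u(t,\cdot)$ is driven by the shifted noise $W(s+\cdot)-W(s)$, which is independent of $\mathcal F_s$; this is the standard argument, see \cite[Chapter 9]{DPZ:96}. It yields the Chapman--Kolmogorov relation $P_{t+s}=P_tP_s$, while $P_0=\mathrm{Id}$, $P_t\mathbbm 1=\mathbbm 1$, positivity, and $\mathcal B_b(H)$-measurability of $v\mapsto P_t\varphi(v)$ (which follows from the measurable dependence of $u(t,v)$ on $v$) are immediate from the definition. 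Hence $\{P_t\}_{t\ge 0}$ is a Markovian semigroup.

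For (b): by Theorem \ref{thm:existence_all_cases} the trajectories $t\mapsto u(t,v)$ are $\P$-a.s. continuous with $u(0,v)=v$, so $\|u(t,v)-v\|\to 0$ $\P$-a.s., hence in probability, as $t\downarrow 0$. Therefore $P_t(v,B(v,r))=\P\big(\|u(t,v)-v\|<r\big)\to 1$ as $t\to 0$, for every $v\in H$ and every $r>0$. For (c), boundedness is trivial since $|P_t\varphi(v)|\le\|\varphi\|_\infty$. For continuity the key estimate is continuous dependence on the initial datum: applying the It\^o formula to $\|u(t,v)-u(t,w)\|^2$ and using the global Lipschitz bound on $KF$ from the proof of Theorem \ref{thm:existence_all_cases}(a), Assumption \ref{assu:noise}, the Burkholder--Davis--Gundy inequality for the (martingale part) stochastic term, and Gr\"onwall's lemma, one obtains a constant $L_T>0$ with
\[\E\big(\|u(t,v)-u(t,w)\|^2\big)\le L_T\,\|v-w\|^2,\qquad t\in[0,T],\ v,w\in H.\]
Thus $v_n\to v$ in $H$ implies $u(t,v_n)\to u(t,v)$ in $L^2(\Omega;H)$, hence in probability; extracting along any subsequence a further subsequence $(v_{n_k})$ with $u(t,v_{n_k})\to u(t,v)$ $\P$-a.s., continuity and boundedness of $\varphi\in C_b(H)$ together with dominated convergence give $P_t\varphi(v_{n_k})=\E[\varphi(u(t,v_{n_k}))]\to\E[\varphi(u(t,v))]=P_t\varphi(v)$. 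Since every subsequence admits such a convergent further subsequence with the common limit $P_t\varphi(v)$, the whole sequence converges, so $P_t\varphi\in C_b(H)$.

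The only mildly delicate point is this last subsequence-of-subsequences argument, which is forced because $\varphi$ is merely bounded and continuous rather than Lipschitz; everything else is a direct consequence of the existence, uniqueness, and Lipschitz/Gr\"onwall estimates already at our disposal, so I do not anticipate any real obstacle.
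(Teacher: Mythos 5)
Your proof is correct, and it differs from the paper's in two places worth noting. For stochastic continuity the paper does not argue from almost sure path continuity as you do; it instead invokes the equivalence of \cite[Proposition 2.1.1]{DPZ:96} (stochastic continuity iff $P_t\varphi(v)\to\varphi(v)$ for all $\varphi\in\Lip_b(H)$) and verifies this via the quantitative moment bound $\E\|u(t,v)-v\|^2\le\tilde\eta\, t\, e^{\tilde\beta t}$; your route is more elementary and uses only pathwise continuity at $t=0$, which Theorem \ref{thm:existence_all_cases} already provides. For the Feller property, both arguments rest on the same continuous-dependence estimate $\E\|u(t,v)-u(t,w)\|^2\le \|v-w\|^2e^{Ct}$ (the paper's \eqref{eq:Feller}), but the paper then handles a general $\varphi\in C_b(H)$ by choosing $\varphi_m\in\Lip_b(H)$ with $\|\varphi_m-\varphi\|_\infty\to 0$ and running a three-epsilon argument, whereas you pass to convergence in probability of $u(t,v_n)$ and use the subsequence-of-subsequences trick with dominated convergence. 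Your version is actually the more robust one: a uniform limit of Lipschitz functions is uniformly continuous, so an arbitrary $\varphi\in C_b(H)$ on the non-compact space $H$ need not admit the uniform Lipschitz approximation the paper uses, while your argument requires nothing beyond boundedness and continuity of $\varphi$. Your observation that Assumptions \ref{assu:hat} and \ref{assu:inv} are not really needed is also consistent with the paper, whose proof only uses $H$-norm estimates. The one point to tidy up is the localization: the It\^o/Gr\"onwall step should be run up to stopping times $\kappa_N$ so that the stochastic integral is a true martingale with vanishing expectation (as the paper does in the proof of Theorem \ref{thm:uniqueness}); your mention of Burkholder--Davis--Gundy is not needed once you take plain expectations of the localized martingale.
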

\begin{proof}
See~\cite[Proposition~4.3.5]{LR:15} for a proof of the Markov property. By \cite[Proposition 2.1.1]{DPZ:96}, $\{P_t\}_{t\geq 0}$ is stochastically continuous if and only if
\[\lim_{t\to 0}P_t\varphi(v)=\varphi(v)\]
for every $\varphi\in \Lip_b(H)$ and every $v\in H$. Clearly, by an adaption of the proof of Corollary \ref{cor:inv} for the $H$-norm, there exist constants, $\tilde{\eta},\tilde{\beta}>0$ with
\begin{align*}&|P_t\varphi(v)-\varphi(v)|^2\\
\le&\Lip(\varphi)^2\E[\|u(t,v)-v\|^2]\\
\le&\Lip(\varphi)^2\tilde{\eta}t e^{\tilde{\beta}t}\longrightarrow 0,
\end{align*}
as $t\to 0$.

Let us prove the Feller property.
Let $\varphi\in C_b(H)$, $t\ge 0$. Hence, we get that,
\begin{align*}
&\|P_t \varphi(\cdot)\|_\infty=\sup_{x\in H}|\E[\varphi(u(t,v))]|\le\sup_{z\in H}\|\varphi(z)\|<\infty.
\end{align*}
Let $v_n,v\in H$, $n\in\N$ such that $\|v_n-v\|\to 0$ as $n\to\infty$. Let $t\ge 0$ and $\varphi\in C_b(H)$. Let $\varphi_m\in\Lip_b(H)$, $m\in\N$ with $\|\varphi_m-\varphi\|_\infty\to 0$ as $m\to\infty$. Then, for any $\eps>0$, there exists $m\in \N$ such that
\[\|\varphi_m-\varphi\|_\infty<\frac{\eps}{3}.\]
By \eqref{eq:Feller} proved below, there exists a non-negative constant $C\ge 0$, such that
\[|P_t \varphi_m(v_n)-P_t \varphi_m(v)|\le\Lip(\varphi_m)\|v_n-v\|_H e^{Ct}<\frac{\eps}{3}\]
for any $n\ge n_0(\delta)$, whenever $\|v_n-v\|<\delta=\delta(\eps,m,t)$ is small enough.
As a consequence,
\begin{align*}
&|P_t \varphi(v_n)-P_t \varphi(v)|\\
\le&|P_t (\varphi-\varphi_m)(v_n)|+|P_t \varphi_m(v_n)-P_t \varphi_m(v)|+|P_t (\varphi_m-\varphi)(v)|\\
=&|\E[(\varphi-\varphi_m)(u(t,v_n))]|+|P_t \varphi_m(v_n)-P_t \varphi_m(v)|+|\E[(\varphi_m-\varphi)(u(t,v))]|\\
<&2\sup_{z\in H}\|\varphi_m(z)-\varphi(z)\|+\frac{\eps}{3}<\eps.
\end{align*}
\end{proof}

\begin{definition}[Invariant measure]
A measure $\mu\in\Mcal_1(H)$ is said to be \emph{invariant} for the semigroup $(P_t)_{t\ge 0}$ if $P^\ast_t\mu=\mu$ for all $t\ge 0$.
\end{definition}

\begin{definition}[Ergodicity and mixing]\label{def:ergo}\phantom{ }
\begin{enumerate}
\item An invariant measure $\mu\in\Mcal_1(H)$ for the semigroup $\{P_t\}_{t\ge 0}$ is called \emph{ergodic} if
\[Q_T(v,\cdot) \rightharpoonup \mu\]
as $T\to\infty$ for any $v\in H$ in the sense of weak convergence of probability measures.
\item An invariant measure $\mu\in\Mcal_1(H)$ for the semigroup $\{P_t\}_{t\ge 0}$ is called \emph{strongly mixing} if
\[P_t(v,\cdot)\rightharpoonup \mu\]
as $t\to\infty$ for any $v\in H$ in the sense of weak convergence of probability measures.
\item An invariant measure $\mu\in\Mcal_1(H)$ for the semigroup $\{P_t\}_{t\ge 0}$ is called \emph{exponentially ergodic} if there exists $\beta>0$ and a positive function $c:H\to\R$ such that for any $\varphi\in\Lip_b(H)$, all $t>0$, and all $v\in H$,
\[\left|P_t\varphi(v)-\int_H\varphi(z)\,\mu(dz)\right|\le c(v)e^{-\beta t}\Lip(\varphi).\]
\item An invariant measure $\mu\in\Mcal_1(H)$ for the semigroup $\{P_t\}_{t\ge 0}$ is called \emph{exponentially mixing} if there exists $\beta>0$ and a positive function $c:H\to\R$ such that for all $t>0$, and all $v\in H$,
\[\|P_t(v,\cdot)-\mu\|_{F-M}\le c(v)e^{-\beta t},\]
where
\[\|\nu\|_{F-M}:=\sup\left\{\left|\int_H\varphi(z)\,\nu(dz)\right|\;\colon\;\varphi\in \Lip_b(H),\;\|\varphi\|_{\Lip_b}\le 1\right\}\]
is the \emph{Forter-Mourier norm} (also called \emph{bounded Lipschitz norm}) of a signed measure $\nu$ on $\Bcal(H)$.
\end{enumerate}
\end{definition}

\begin{remark}
By the results of \cite{DPZ:96}, the existence of a unique invariant measure is equivalent to (i). (ii) implies (i). (iii) and (iv) are equivalent and both imply (ii). 
\end{remark}

\begin{proposition}[Tightness of measures]\label{prop:tight}
Suppose that
Assumptions \ref{assu:space}, \ref{assu:noise}, \ref{assu:hat}, and \ref{assu:inv} hold.
Let $v\in H_1$ and denote by $u(t,v)$ the unique solution to \eqref{eq:gen_model} with $u(0,v)=v$. Let $A\subset H$. Then, the family of probability measures $\{Q_t(v,\cdot)\}_{t\ge 1}$
is tight.
\end{proposition}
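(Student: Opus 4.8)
The plan is to derive tightness from the compact embedding of the nonlocal Hilbert space $H_1$ into $H$ (Lemma~\ref{lem:compact}) together with a uniform-in-time bound on the time-averaged second moment of $\|u(s,v)\|_1$, which is already contained in Corollary~\ref{cor:inv}. (The set $A\subset H$ appearing in the statement plays no role and may be ignored; presumably it is a leftover.)

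First I would observe that, since $v\in H_1$, Corollary~\ref{cor:inv} applies. Using only the integral term on its left-hand side, and that $2\alpha-\beta>0$ by Assumption~\ref{assu:inv}, we obtain for every $t\ge 1$
\[
\frac{1}{t}\,\E\int_0^t\|u(s,v)\|_1^2\,ds\;\le\;\frac{\|v\|_1^2+\eta t}{(2\alpha-\beta)\,t}\;\le\;\frac{\|v\|_1^2+\eta}{2\alpha-\beta}\;=:\;M(v)\;<\;\infty,
\]
a bound independent of $t\ge 1$. In particular $u(s,v)\in H_1$ for a.e.\ $(s,\omega)$, so $\|u(s,v)\|_1$ is a.e.\ finite. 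For $R>0$ set
\[
K_R:=\{u\in H:\ \|u\|_1\le R\},
\]
understanding $\|u\|_1:=+\infty$ for $u\notin H_1$; by Lemma~\ref{lem:compact} each $K_R$ is a compact, hence Borel, subset of $H$. One should also record that $s\mapsto P_s(v,H\setminus K_R)=\P\big(\|u(s,v)\|_1>R\big)$ is measurable — this follows from the (joint) measurability of the solution flow established in Theorem~\ref{thm:existence_all_cases}, and it is what legitimizes Fubini's theorem and the very definition of $Q_t(v,\cdot)$ as a Borel probability measure below.

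Next, by the definition $Q_t(v,\cdot)=\frac1t\int_0^tP_s(v,\cdot)\,ds$, Fubini's theorem, and Chebyshev's inequality,
\[
Q_t\big(v,\,H\setminus K_R\big)=\frac1t\int_0^t\P\big(\|u(s,v)\|_1>R\big)\,ds\le\frac{1}{R^2}\cdot\frac1t\int_0^t\E\|u(s,v)\|_1^2\,ds\le\frac{M(v)}{R^2}
\]
for all $t\ge 1$. Given $\eps>0$, choosing $R=R(\eps,v)$ with $M(v)/R^2<\eps$ produces a compact set $K_R\subset H$ with $\sup_{t\ge 1}Q_t(v,H\setminus K_R)<\eps$, which is exactly the tightness of $\{Q_t(v,\cdot)\}_{t\ge 1}$.

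The argument is short once Corollary~\ref{cor:inv} is in hand; the only mildly delicate points are the measurability of $s\mapsto P_s(v,H\setminus K_R)$ and the verification that $Q_t(v,\cdot)$ is a genuine Borel probability measure on $H$, both routine consequences of the regularity of the solution from Theorem~\ref{thm:existence_all_cases}. I therefore do not anticipate a real obstacle here: the substantive work — the $H_1$-invariance of the flow and the a priori estimate — has already been carried out in Proposition~\ref{prop:inv} and Corollary~\ref{cor:inv}.
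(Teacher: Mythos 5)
Your proposal is correct and follows essentially the same route as the paper: both rest on the time-averaged second-moment bound in $\|\cdot\|_1$ from Corollary~\ref{cor:inv}, the compactness of $\|\cdot\|_1$-balls in $H$ from Lemma~\ref{lem:compact}, and a Markov/Chebyshev estimate on $Q_t(v,H\setminus K_R)$, differing only in the cosmetic normalization of the compact set (the paper uses $\{\gamma\|\cdot\|_1^2\le R\}$ where you use $\{\|\cdot\|_1\le R\}$). Your additional remarks on measurability and on the irrelevance of the set $A$ are sound but not part of the paper's argument.
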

\begin{proof}
Set $\gamma:=2\alpha-\beta>0$, where $\beta$ is as in Assumption \ref{assu:inv}. Note that by Lemma \ref{lem:compact} the set
\begin{align*}
K_{R}:=\{\gamma\|\cdot\|^2_1\leq R\}\subset H
\end{align*}
is compact in $H$, for any radius $R>0$. By the Markov inequality,
\begin{align*}
&Q_t(v,K_R)\, ds = \frac{1}{t}\int_0^t\mathbb{P}(u(s,v)\in K_R)\, ds\\
&=\frac{1}{t}\int_0^t\mathbb{P}(\{\gamma\|u(s,v)\|_1^2\leq R\})\, ds\\
&=\frac{1}{t}\int_0^t\left( 1 - \mathbb{P}(\{\gamma\|u(s,v)\|_1^2> R\})\right)\, ds\\
&\geq \frac{1}{t}\int_0^t\left(1 - \frac{1}{R}\mathbb{E}\left(\gamma\|u(s,v)\|_1^2\right)\right)\, ds\\
&= 1 - \frac{1}{R}\left(\frac{1}{t}\mathbb{E}\left(\gamma\int_0^t\|u(s,v)\|_1^2\, ds\right)\right).
\end{align*}
From Corollary \ref{cor:inv}, we have that
\begin{align*}
\mathbb{E}\left(\frac{\gamma}{t}\int_0^t\|u(s,v)\|_1^2\, ds\right) \leq \frac{1}{t}\left( \|v\|_1^2 + \eta t\right)\leq \|v\|_1^2 + \eta . 
\end{align*}
Therefore, 
\begin{align*}
&1 - \frac{1}{R}\left(\frac{1}{t}\mathbb{E}\left(\gamma\int_0^t\|u(s,v)\|_1^2\, ds\right)\right)\\
&\geq 1 - \frac{1}{R}\left(\|v\|_1^2 + \eta \right)\\
& > 1-\varepsilon,
\end{align*}
if $R>\frac{1}{\varepsilon}(\|v\|_1^2 + \eta)$. Thus, the family $\{Q_t\}_{t\geq 1}$ is tight. 
\end{proof}

\begin{theorem}[Existence of invariant measures]\label{thm:existence}
Suppose that
Assumptions \ref{assu:space}, \ref{assu:noise}, \ref{assu:hat}, and \ref{assu:inv} hold.
There exists at least one invariant probability measure $\mu$ for $\{P_t\}_{t\ge 0}$, that is,
\begin{align*}
\int_H (P_t\varphi )(v)\,\mu(dv) = \int_H\varphi(v)\,\mu(dv),
\end{align*}
for every $\varphi\in \mathcal{B}_b(H)$.
\end{theorem}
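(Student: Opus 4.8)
The plan is to run the classical Krylov--Bogoliubov argument, whose two essential ingredients — tightness of the time-averaged laws and the Feller property of $\{P_t\}_{t\ge 0}$ — are already available from Proposition \ref{prop:tight} and the preceding lemma. First I would fix an initial datum $v\in H_1$ (for instance $v=0\in H_1$, so that the a priori bound of Corollary \ref{cor:inv} applies) and invoke Proposition \ref{prop:tight} to conclude that the family $\{Q_t(v,\cdot)\}_{t\ge 1}\subset\Mcal_1(H)$ is tight. Since $H$ is Polish, Prokhorov's theorem then produces a sequence $T_n\to\infty$ (with $T_n\ge 1$) and a probability measure $\mu\in\Mcal_1(H)$ with $Q_{T_n}(v,\cdot)\rightharpoonup\mu$ as $n\to\infty$; this $\mu$ is the candidate invariant measure.

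Next I would verify invariance by testing against $\varphi\in C_b(H)$ for fixed $s\ge 0$. By the Feller property, $P_s\varphi\in C_b(H)$, so the weak convergence gives $\int_H P_s\varphi\,d\mu=\lim_n\int_H P_s\varphi\,dQ_{T_n}(v,\cdot)$. Unwinding the definition of $Q_{T_n}$, using the semigroup identity $P_{t+s}=P_tP_s$, and applying Fubini (legitimate because $P_s\varphi$ is bounded and Borel, and, by stochastic continuity, $t\mapsto P_t\varphi(v)$ is measurable) rewrites the right-hand side as $\lim_n\tfrac1{T_n}\int_0^{T_n}(P_{t+s}\varphi)(v)\,dt$. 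A shift of the integration variable expresses this as $\tfrac1{T_n}\int_0^{T_n}(P_t\varphi)(v)\,dt$ plus two boundary integrals over intervals of length $s$, whose total absolute value is at most $2s\|\varphi\|_\infty/T_n\to 0$. Hence $\int_H P_s\varphi\,d\mu=\lim_n\tfrac1{T_n}\int_0^{T_n}(P_t\varphi)(v)\,dt=\lim_n\int_H\varphi\,dQ_{T_n}(v,\cdot)=\int_H\varphi\,d\mu$, the last step being again the weak convergence applied to $\varphi\in C_b(H)$. As this holds for every $\varphi\in C_b(H)$ on the Polish space $H$, it forces $P_s^\ast\mu=\mu$ for all $s\ge 0$.

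Finally I would pass from $C_b(H)$ to $\Bcal_b(H)$: the measure identity $P_s^\ast\mu=\mu$ means $\int_H P_s\mathbbm{1}_A\,d\mu=\mu(A)$ for every $A\in\Bcal(H)$, and a routine linearity plus monotone-convergence argument extends this to $\int_H(P_t\varphi)(v)\,\mu(dv)=\int_H\varphi(v)\,\mu(dv)$ for every $\varphi\in\Bcal_b(H)$, which is precisely the assertion. Alternatively one may simply cite the Krylov--Bogoliubov theorem in the form of \cite[Theorem~3.1.1]{DPZ:96}, whose hypotheses — stochastic continuity, the Feller property, and weak convergence of a time-average along some sequence — are exactly what has been checked here and in Proposition \ref{prop:tight}.

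I do not expect a genuine obstacle in this last assembly: the substantive work lies in the tightness of Proposition \ref{prop:tight}, which in turn rests on the compact embedding $H_1\hookrightarrow H$ of Lemma \ref{lem:compact} together with Corollary \ref{cor:inv}, and in the Feller lemma. The only mildly delicate point is the $t$-measurability of $t\mapsto P_t\varphi(v)$ needed to form the Cesàro averages $Q_{T_n}(v,\cdot)$ and to justify Fubini, which is guaranteed by stochastic continuity of the semigroup; and one should keep in mind that $\mu$ is constructed on all of $H$ (not merely on $H_1$), so that invariance is correctly asserted on $H$, consistently with the Feller property having been established on $H$.
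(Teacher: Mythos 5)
Your proposal is correct and follows the same route as the paper: the paper's proof consists precisely of invoking the tightness of $\{Q_t(v,\cdot)\}_{t\ge 1}$ from Proposition \ref{prop:tight} together with the Krylov--Bogoliubov theorem \cite[Theorem~3.1.1]{DPZ:96}, whose standard argument (Prokhorov, Feller property, shift of the Ces\`aro average) you have merely written out in full. The additional details you supply, including the measurability remark and the extension from $C_b(H)$ to $\Bcal_b(H)$, are all consistent with the cited theorem.
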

\begin{proof}
The tightness of the family of probability measures $\{Q_t(v,\cdot)\}_{t\ge 1}$ obtained in Proposition \ref{prop:tight} yields the existence of at least one invariant measure by the Krylov-Bogoliubov theorem \cite[Theorem 3.1.1]{DPZ:96}.
\end{proof}

We thus have proved Theorem \ref{thm:main} (ii).

\subsection{Uniqueness of invariant measures}\label{subsec:4.3}

\begin{assumption}\label{assu:ergo}
Assume that
\[\tilde{\lambda}:=2\sqrt{2}\|K\|_{L(H)}\Lip(f)+C_B<2\alpha,\]
where $C_B\ge 0$ is as in Assumption \ref{assu:noise}.
Then also
\[\lambda:=2\|K\|_{L(H)}\Lip(f)+C_B<2\alpha.\]
\end{assumption}

First, we prove the finiteness of second moments for any invariant measure under this assumption.

\begin{lemma}\label{lem:2nd}
Suppose that
Assumptions \ref{assu:space}, \ref{assu:noise}, \ref{assu:hat}, \ref{assu:inv}, and \ref{assu:ergo} hold.
Then any weak accumulation point $\mu$ of $\{Q_t(v,\cdot)\}_{t\ge 1}$ is an invariant measure for $\{P_t\}_{t\ge 0}$ that has second moments in the space $H$, in particular, there is some finite constant $\hat{C}\ge 0$ such that 
\begin{align*}
\int_H\|u\|^2\ \mu(du) \leq \hat{C}.
\end{align*}
\end{lemma}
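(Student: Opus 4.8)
The plan is to apply the Itô formula to the functional $u \mapsto \|u\|^2$ (in the ambient space $H$, not the nonlocal space $H_1$) for the solution $u(t,v)$ and derive an a~priori bound on $\E\|u(t,v)\|^2$ that is uniform in $t \geq 0$, then transfer this bound to any weak limit $\mu$ of the family $\{Q_t(v,\cdot)\}_{t\ge 1}$. First I would write, using Itô's formula and taking expectations (noting the stochastic integral is a genuine martingale, or arguing via stopping times $\tau_N$ as in the proof of Proposition~\ref{prop:inv} and Corollary~\ref{cor:inv}),
\begin{align*}
\E\|u(t,v)\|^2 = \|v\|^2 &+ \E\int_0^t\left(-2\alpha\|u(s)\|^2 + 2\langle KF(u(s)),u(s)\rangle + \|B(u(s))\|^2_{L_2(V,H)}\right)ds.
\end{align*}
The cross term is estimated by Cauchy--Schwarz, the operator norm bound $\|Ku\| \le \|K\|_{L(H)}\|u\|$, Lemma~\ref{lem:nemytskii_lipschitz} (with $\zeta=1$), and Young's inequality: $2\langle KF(u),u\rangle \le 2\|K\|_{L(H)}\|F(u)\|\,\|u\| \le 2\|K\|_{L(H)}(\sqrt{2}\Lip(f)\|u\| + \sqrt{2}|f(0)|\|\rho\|_{L^1(U)}^{1/2})\|u\|$, and the linear-in-$\|u\|$ piece is absorbed into a multiple of $\|u\|^2$ plus a constant. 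Similarly the noise term satisfies $\|B(u)\|^2_{L_2(V,H)} \le (C_B\|u\| + \|B(0)\|_{L_2(V,H)})^2 \le (1+\epsilon)C_B^2\|u\|^2 + C(\epsilon)\|B(0)\|^2$. Collecting terms, and using Assumption~\ref{assu:ergo} to guarantee $\tilde\lambda = 2\sqrt{2}\|K\|_{L(H)}\Lip(f) + C_B < 2\alpha$ so that the coefficient of $\E\int_0^t\|u(s)\|^2ds$ can be made strictly negative (after choosing the Young-inequality parameters appropriately — here one needs a little care to keep the coefficient $2\sqrt2\|K\|_{L(H)}\Lip(f) + C_B$ rather than something larger, which is exactly why the assumption uses the factor $2\sqrt2$ and a separate $C_B$ rather than $C_B^2$), one obtains a differential inequality of Grönwall type, $\frac{d}{dt}\E\|u(t,v)\|^2 \le -(2\alpha - \tilde\lambda)\E\|u(t,v)\|^2 + C$, which integrates to
\[\E\|u(t,v)\|^2 \le \|v\|^2 e^{-(2\alpha-\tilde\lambda)t} + \frac{C}{2\alpha-\tilde\lambda}\left(1 - e^{-(2\alpha-\tilde\lambda)t}\right) \le \|v\|^2 + \hat{C}_0\]
for a finite constant $\hat{C}_0 = \hat{C}_0(\alpha,\|K\|_{L(H)},\Lip(f),f(0),\|\rho\|_{L^1(U)},C_B,\|B(0)\|)$ independent of $t$.

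Next I would transfer this to the averaged measures: for $v \in H_1$ we have $\|v\| < \infty$, so $\int_H \|u\|^2\, Q_t(v,du) = \frac1t\int_0^t \E\|u(s,v)\|^2\,ds \le \|v\|^2 + \hat{C}_0$ uniformly in $t\ge 1$. Since the weight $\|\cdot\|^2$ is a nonnegative lower semicontinuous function on $H$ (indeed continuous), the portmanteau theorem gives that for any weak accumulation point $\mu$ along a sequence $t_n\to\infty$,
\[\int_H \|u\|^2\,\mu(du) \le \liminf_{n\to\infty}\int_H \|u\|^2\, Q_{t_n}(v,du) \le \|v\|^2 + \hat{C}_0 =: \hat{C} < \infty.\]
That $\mu$ is in fact invariant for $\{P_t\}_{t\ge 0}$ is already established in Theorem~\ref{thm:existence} via the Krylov--Bogoliubov theorem (using tightness from Proposition~\ref{prop:tight}), so nothing new is needed there; the point of this lemma is purely the quadratic-moment control.

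The main obstacle I anticipate is bookkeeping in the Young-inequality step: one must split $2\langle KF(u),u\rangle$ and $\|B(u)\|^2$ so that the total coefficient of $\|u\|^2$ in the drift is $2\sqrt2\|K\|_{L(H)}\Lip(f) + C_B + (\text{arbitrarily small }\epsilon)$ — matching $\tilde\lambda$ — rather than picking up an extra factor or turning $C_B$ into $C_B^2$; this forces a careful, $\epsilon$-dependent choice (e.g. writing $(C_B\|u\|+b)^2 \le C_B\|u\|^2\cdot\frac{C_B\|u\|+b}{\|u\|}$-type splittings, or more cleanly absorbing only the genuinely quadratic parts into the exponent while dumping the lower-order cross terms into the additive constant $C$). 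A secondary technical point is justifying the martingale property of the stochastic integral (or, failing integrability a~priori, running the stopping-time argument $\tau_N\uparrow\infty$ together with Fatou's lemma exactly as in Corollary~\ref{cor:inv}), but this is routine given the linear growth of $B$ and the moment bounds from Theorem~\ref{thm:existence_all_cases}.
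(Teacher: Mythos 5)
Your argument is correct, and the first half (It\^o's formula for $\|\cdot\|^2$ in $H$, Cauchy--Schwarz and Lemma \ref{lem:nemytskii_lipschitz} for the drift, linear growth of $B$ for the It\^o correction, stopping times $\lambda_N$ to kill the stochastic integral) coincides with the paper's computation. Where you genuinely diverge is in how the moment bound is transferred to $\mu$. The paper stops at the linear-in-$t$ estimate $\E\|u(t,v)\|^2+\tilde\gamma\,\E\int_0^t\|u(s,v)\|^2\,ds\le\|v\|^2+\tilde C t$, truncates at level $L$, integrates against the invariant measure $\mu$ itself, divides by $t$ and uses invariance twice (plus Fubini and Fatou) to cancel the initial term and obtain the $v$-independent bound $\int\|z\|^2\,\mu(dz)\le\tilde C/\tilde\gamma$. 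You instead close the Gr\"onwall/ODE-comparison loop to get the uniform-in-time bound $\E\|u(t,v)\|^2\le\|v\|^2+\hat C_0$, average in $t$, and pass to the weak limit via the portmanteau/lower-semicontinuity argument for $\|\cdot\|^2\wedge L$. Both are valid; your route yields a stronger intermediate statement (a time-uniform second-moment bound for the solution, of independent interest) at the price of a $v$-dependent constant $\hat C=\|v\|^2+\hat C_0$, which is still perfectly adequate for the lemma as stated and for its use in Theorem \ref{thm:uniqueness}, while the paper's route gives a constant independent of the starting point. Your use of Krylov--Bogoliubov for the invariance of any accumulation point matches the paper.

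One bookkeeping point you rightly flag: Assumption \ref{assu:noise} gives $\|B(u)\|_{L_2(V,H)}\le C_B\|u\|+\|B(0)\|_{L_2(V,H)}$, so the It\^o correction contributes $(1+\epsilon)C_B^2\|u\|^2$ to the quadratic coefficient, not $C_B\|u\|^2$; closing the estimate under Assumption \ref{assu:ergo} as literally written (with $C_B$, not $C_B^2$) therefore requires either $C_B\le1$ or reading $C_B$ in Assumption \ref{assu:ergo} as the constant for which $\|B(u)-B(v)\|^2_{L_2(V,H)}\le C_B\|u-v\|^2$. This discrepancy is present in the paper's own proof as well (which writes $C_B\E\int\|u\|^2$ for the trace term), so it is not a defect of your argument specifically, but it deserves the explicit resolution you hint at rather than just a parenthetical.
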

\begin{proof}
It is clear from the Krylov-Bogoliubov theorem \cite[Theorem 3.1.1]{DPZ:96} that any weak accumulation point of $\{Q_t(v,\cdot)\}_{t\ge 1}$ is an invariant measure for $\{P_t\}_{t\ge 0}$.
Fix $v\in H$. Applying It\^o's formula to the functional $u\mapsto\|u\|^2$, we obtain by Lemma \ref{lem:nemytskii_lipschitz},
\begin{align*}
&\|u(t,v)\|^2 + 2\alpha\int_0^t\|u(s,v)\|^2\, ds\\
\leq & \|v\|^2 + 2\sqrt{2}\int_0^t\|K\|_{L(H)}\left(\Lip(f)\|u(s,v)\|^2 + |f(0)|\|\rho\|_{L^1(U)}^{1/2}\|u(s,v)\|\right)\, ds\\
&+2\int_0^t\langle u(s,v),B(u(s,v))\, dW(s)\rangle +\int_0^t\|B(u(s,v))\|^2_{L_2(V,H)}\, ds.
\end{align*}
Define a sequence of stopping times
\[\lambda_N:=\inf\{s\in[0,T]:\|u(s,v)\|^2 > N \}\wedge T,\quad N\in\N,\]
and consider for $t\wedge\lambda_N$, noting that
\[t\mapsto\int_0^{t\wedge\lambda_N}\langle u(s,v),B(u(s,v))\, dW(s)\rangle\]
is a martingale with mean zero, for $\delta>0$ and some $C(\delta)>0$
\begin{align*}
&\E\|u(t\wedge\lambda_N,v)\|^2 + 2\alpha\E\int_0^{t\wedge\lambda_N}\|u(s,v)\|^2\, ds\\
\leq & \|v\|^2 + 2\sqrt{2}\E\int_0^{t\wedge\lambda_N}\|K\|_{L(H)}\left(\Lip(f)\|u(s,v)\|^2 + |f(0)|\|\rho\|_{L^1(U)}^{1/2}\|u(s,v)\|\right)\, ds\\
&+\E\int_0^{t\wedge\lambda_N}\|B(u(s,v))\|^2_{L_2(V,H)}\, ds\\
\leq & \|v\|^2 + 2\sqrt{2}\E\int_0^{t\wedge\lambda_N}\|K\|_{L(H)}\Lip(f)+\delta)\|u(s,v)\|^2 \,ds\\
&+2\sqrt{2}\|K\|_{L(H)}C(\delta)|f(0)|^2\|\rho\|_{L^1(U)}(t\wedge\lambda_N)\\
&+C_B\E\int_0^{t\wedge\lambda_N}\|u(s,v)\|^2_{L_2(V,H)}\, ds+\|B(0)\|_{L(V,H)}(t\wedge\lambda_N).
\end{align*}
Recall that, by assumption,
$\tilde{\lambda}=2\sqrt{2}\|K\|_{L(H)}\Lip(f)+C_B<2\alpha$, and hence there exists $\delta>0$, such that $\tilde{\lambda}+\delta<2\alpha$.
We obtain for some $C(\delta)>0$,
\begin{align*}
&\E\|u(t\wedge\lambda_N,v)\|^2 + (2\alpha-\tilde{\lambda}-\delta)\E\int_0^{t\wedge\lambda_N}\|u(s,v)\|^2\, ds\\
\leq & \|v\|^2 + \left(2\sqrt{2}\|K\|_{L(H)}C(\delta)|f(0)|^2\|\rho\|_{L^1(U)}+\|B(0)\|_{L(V,H)}\right)(t\wedge\lambda_N)
\end{align*}
Rewrite
\[\tilde{C}:=\left(2\sqrt{2}\|K\|_{L(H)}C(\delta)|f(0)|^2\|\rho\|_{L^1(U)}+\|B(0)\|_{L(V,H)}\right),\]
and
\[\tilde{\gamma}:=(2\alpha-\tilde{\lambda}-\delta)>0.\]
Let $N\to\infty$, so that for any $t\in [0,T]$,
\begin{align*}
&\E\|u(t,v)\|^2 + \tilde{\gamma}\E\int_0^{t\wedge\lambda_N}\|u(s,v)\|^2\, ds
\leq  \|v\|^2 + \tilde{C}t
\end{align*}
Let $L>0$.
Then, cutting off, and integrating against $\mu$ yields
\begin{align*}
&\int_H\mathbb{E}\left(\|u(s,v)\|^2\wedge L\right)\,\mu(dv) + \tilde{\gamma}\int_H\mathbb{E}\int_0^t\left(\|u(s,v)\|^2\wedge L\right)\, ds\,\mu(dv)\\
\leq& \int_H\left(\|v\|^2\wedge L\right)\,\mu(dv) + \tilde{C}\wedge L.
\end{align*}
Dividing by $t\ge 1$ and using invariance of $\mu$ yields
\begin{align*}
&\frac{1}{t}\int_H\left(\|v\|^2\wedge L\right)\,\mu(dv) +\tilde{\gamma}\int_H\mathbb{E}\left[\frac{1}{t}\int_0^t\left(\|u(s,v)\|^2\wedge L\right)\, ds\right]\,\mu(dv)\\
\leq& \frac{1}{t}\int_H\left(\|v\|^2\wedge L\right)\,\mu(dv) + \frac{1}{t}\tilde{C}\wedge L.
\end{align*}
and by Fubini's theorem, and invariance again,
\begin{align*}
&\tilde{\gamma}\int_H\left(\|v\|^2\wedge L\right)\,\mu(dv)
\leq \frac{1}{t}\tilde{C}\wedge L\le \tilde{C}.
\end{align*}
Now, letting $L\to\infty$ and using Fatou's lemma yields the desired estimate,
\begin{align*}
&\int_H\|v\|^2\,\mu(dv)
\leq \frac{\tilde{C}}{\tilde{\gamma}}=:\hat{C}.
\end{align*}
Hence, $\mu\in\mathcal{M}_1^2(H)$.
\end{proof}

Let us prove the uniqueness of invariant measures, and the exponential ergodicity.

\begin{theorem}[Uniqueness of the invariant measure]\label{thm:uniqueness}
Suppose that
Assumptions \ref{assu:space}, \ref{assu:noise}, \ref{assu:hat}, \ref{assu:inv}, and \ref{assu:ergo} hold.
The exists at most one exponentially ergodic and exponentially mixing invariant probability measure $\mu$ for $\{P_t\}_{t\ge 0}$.
In particular, for any $v\in H$, and any $t\ge 0$,
\[\left|P_t\varphi(v) - \int_H\varphi(z)\, \mu(dz) \right|\le 2\Lip(\varphi)\left(\|v\|^2+\hat{C}\right)e^{-(2\alpha-\lambda)t},\]
for any $\varphi\in\Lip_b(H)$,
where $\lambda$ is as in Assumption \ref{assu:ergo} and $\hat{C}\ge 0$ is as in Lemma \ref{lem:2nd}.
\end{theorem}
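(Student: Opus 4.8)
The plan is to prove the quantitative estimate directly by a synchronous coupling, from which uniqueness, exponential ergodicity, and exponential mixing all follow. The three ingredients I would use are: an $L^2$-contraction between two solutions with the same driving noise but different initial data; the second-moment bound of Lemma~\ref{lem:2nd}, applied to an arbitrary invariant measure; and the invariance of $\mu$ to pass from a two-point to a one-point estimate.

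\emph{Contraction.} Fix $v,w\in H$ and let $u(\cdot,v),u(\cdot,w)$ be the unique strong solutions of \eqref{eq:gen_model} driven by the \emph{same} cylindrical Wiener process $W$ (Theorem~\ref{thm:existence_all_cases}); put $\xi(t):=u(t,v)-u(t,w)$. Applying It\^o's formula to $t\mapsto\|\xi(t)\|^2$, the linear term gives $-2\alpha\|\xi(s)\|^2$, the nonlinearity gives $2\langle K(F(u(s,v))-F(u(s,w))),\xi(s)\rangle$, bounded by Cauchy--Schwarz, $\|K\|_{L(H)}$ and $\Lip(F)=\Lip(f)$ (Lemma~\ref{lem:nemytskii_lipschitz}) by $2\|K\|_{L(H)}\Lip(f)\|\xi(s)\|^2$ --- here, unlike in Proposition~\ref{prop:inv}, the symmetric/antisymmetric splitting of $K$ is unavailable, so the full operator norm appears --- and the quadratic variation gives a trace term bounded via Assumption~\ref{assu:noise} by $C_B\|\xi(s)\|^2$. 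I would localise along a sequence of stopping times $\tau_N$ to make the stochastic integral a mean-zero martingale, take expectations, send $N\to\infty$ via monotone convergence and Fatou, and use Assumption~\ref{assu:ergo} ($\lambda=2\|K\|_{L(H)}\Lip(f)+C_B<2\alpha$) to obtain
\[\E\|\xi(t)\|^2+(2\alpha-\lambda)\int_0^t\E\|\xi(s)\|^2\,ds\le\|v-w\|^2,\qquad t\ge0,\]
hence $\E\|u(t,v)-u(t,w)\|^2\le\|v-w\|^2e^{-(2\alpha-\lambda)t}$ by Gr\"onwall. With $|\varphi(a)-\varphi(b)|\le\Lip(\varphi)\|a-b\|$ for $\varphi\in\Lip_b(H)$, this gives the Lipschitz--contraction estimate $|P_t\varphi(v)-P_t\varphi(w)|\le\Lip(\varphi)\,\E\|u(t,v)-u(t,w)\|$ with exponentially decaying right-hand side.

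\emph{Conclusion.} Let $\mu$ be invariant (one exists by Theorem~\ref{thm:existence}). Re-running the argument in the proof of Lemma~\ref{lem:2nd} --- It\^o for $\|\cdot\|^2$, Lemma~\ref{lem:nemytskii_lipschitz}, Assumption~\ref{assu:ergo}, truncation at level $L$, division by $t$, Fubini, invariance, then $L,t\to\infty$ --- shows $\int_H\|z\|^2\,\mu(dz)\le\hat{C}$ for this $\mu$. By invariance $\int_H P_t\varphi\,d\mu=\int_H\varphi\,d\mu$, so
\[P_t\varphi(v)-\int_H\varphi(z)\,\mu(dz)=\int_H (P_t\varphi(v)-P_t\varphi(w))\,\mu(dw),\]
and bounding the integrand by the Lipschitz--contraction estimate, together with $\|v-w\|^2\le2\|v\|^2+2\|w\|^2$ and $\int_H\|w\|^2\,\mu(dw)\le\hat{C}$, yields the asserted bound $|P_t\varphi(v)-\int_H\varphi\,d\mu|\le2\Lip(\varphi)(\|v\|^2+\hat{C})e^{-(2\alpha-\lambda)t}$. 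If $\mu_1,\mu_2$ are two invariant measures, then $\int\varphi\,d\mu_1-\int\varphi\,d\mu_2=\int_H\int_H (P_t\varphi(v)-P_t\varphi(w))\,\mu_1(dv)\,\mu_2(dw)\to0$ as $t\to\infty$ for every $\varphi\in\Lip_b(H)$, so $\mu_1=\mu_2$; together with Theorem~\ref{thm:existence} there is exactly one invariant measure $\mu$. The displayed bound is exponential ergodicity (Definition~\ref{def:ergo}(iii)) with $c(v)=2(\|v\|^2+\hat{C})$; since $\|\varphi\|_{\Lip_b}\le1$ forces $\Lip(\varphi)\le1$, it also gives $\|P_t(v,\cdot)-\mu\|_{F-M}\le2(\|v\|^2+\hat{C})e^{-(2\alpha-\lambda)t}$, i.e. exponential mixing (Definition~\ref{def:ergo}(iv)), and in particular $\mu_u(t,v)\rightharpoonup\mu$ as $t\to\infty$ for every $v\in H$.

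The main obstacle is the contraction step: verifying that the It\^o drift and trace terms combine to a \emph{uniformly} strictly negative rate $-(2\alpha-\lambda)$ using only $\|K\|_{L(H)}$ (no gain from the nonlocal space $H_1$), and handling the stochastic integral by localisation so that its expectation vanishes before the limit is taken. A secondary delicate point is that the second-moment estimate of Lemma~\ref{lem:2nd}, stated there for weak limits of $\{Q_t(v,\cdot)\}_{t\ge1}$, must be established for an arbitrary invariant measure, and some care with the constants is needed when passing from the $L^2$-bound on $\xi$ to the Lipschitz/Fortet--Mourier estimate in order to match the exact exponential rate claimed.
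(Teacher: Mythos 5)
Your proposal follows essentially the same route as the paper's proof: a synchronous coupling, It\^o's formula for $\|u(t,v)-u(t,w)\|^2$ with localisation by stopping times, the bound $2\|K\|_{L(H)}\Lip(f)+C_B=\lambda<2\alpha$ from Assumption \ref{assu:ergo}, Gr\"onwall, and then integration against an invariant measure using the second-moment bound of Lemma \ref{lem:2nd}. The caveat you raise at the end --- that passing from $\bigl(\E\|\xi(t)\|^2\bigr)^{1/2}$ to the claimed bound with $\|v-w\|^2$ and rate $e^{-(2\alpha-\lambda)t}$ needs care --- is a real point, but it is present in the paper's own computation as well, so your argument matches the paper's in both structure and level of rigour.
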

\begin{proof}
Let $u,z\in H$ and for any $\varphi\in\Lip_b(H)$ consider the difference
\begin{align*}
&\left|P_t\varphi(u)-P_t\varphi(z)\right|\\
=& \mathbb{E}\left|\varphi(u(t,v))-\varphi(u(t,z))\right|\\
\leq&  \Lip(\varphi)\mathbb{E}\|u(t,v)-u(t,z)\|\\
\leq& \Lip(\varphi)\left(\mathbb{E}\left(\|u(t,v)-u(t,z)\|^2\right)\right)^{1/2}.
\end{align*}
Next, we need to bound the term $\mathbb{E}\|u(t,v)-u(t,z)\|^2$, so let $\{\kappa_N\}$ be a sequence of stopping times given by
\begin{align*}
\kappa_N:=\inf\{t\in[0,T]: \|u(t,v)-u(t,z)\|^2>N\}\wedge T,\quad N\in\N,
\end{align*}
and apply It\^o's formula to obtain, for $t\wedge\kappa_N$,
\begin{align*}
&\mathbb{E}\|u(t\wedge\kappa_N,v)-u(t\wedge\kappa_N,z)\|^2 + 2\alpha\mathbb{E}\int_0^{t\wedge\kappa_N}\|u(s,v)-u(s,z)\|^2\, ds \\
=& \|v-z\|^2
 + 2\mathbb{E}\int_0^{t\wedge\kappa_N}\langle KF(u(s,v))-KF(u(s,z)),u(s,v)-u(s,z) \rangle\, ds\\
& + 2\mathbb{E}\int_0^{t\wedge\kappa_N}\langle u(s,v)-u(s,z),[B(u(s,v))-B^*(u(s,z))]\, dW(s) \rangle\\
& + \mathbb{E}\int_0^{t\wedge\kappa_N}\operatorname{Tr}_H [B(u(s,v))-B(u(s,z)),B^*(u(s,v))-B^*(u(s,z))]\, ds.
\end{align*}
Since 
\begin{align*}
M:t\mapsto \int_0^t\langle u(s,v)-u(s,z),[B(u(s,v))-B^*(u(s,z))]\, dW(s) \rangle
\end{align*}
is a local martingale, we have that
\begin{align*}
\mathbb{E}\left[M_{t\wedge\kappa_N}\right] = \mathbb{E}\left[M_0\right] = 0. 
\end{align*}
Furthermore, 
\begin{align*}
&2\mathbb{E}\int_0^{t\wedge\kappa_N}\langle KF(u(s,v))-KF(u(s,z)),u(s,v)-u(s,z) \rangle\, ds\\
\leq& 2\mathbb{E}\int_0^{t\wedge\kappa_N}\|KF(u(s,v))-KF(u(s,z))\|  \|u(s,v)-u(s,z)\|\, ds\\
\leq& 2\mathbb{E}\int_0^{t\wedge\kappa_N}\|K\|_{L(H)}\Lip(f)\|u(s,v)-u(s,z)\| \|u(s,v)-u(s,z)\|\, ds\\
=& 2\|K\|_{L(H)}\Lip(f)\mathbb{E}\int_0^{t\wedge\kappa_N}\|u(s,v)-u(s,z)\|^2\, ds.
\end{align*}
Furthermore,
\begin{align*}
&\mathbb{E}\int_0^{t\wedge\kappa_N}\operatorname{Tr}_H[B(u(s,v))-B(u(s,z)),B^*(u(s,v))-B^*(u(s,z))]\, ds\\
\leq& \mathbb{E}\int_0^{t\wedge\kappa_N}\|B(u(s,v))-B(u(s,z))\|^2_{L_2(V,H)}\, ds\\
\leq& C_B\mathbb{E}\int_0^{t\wedge\kappa_N}\|u(s,v)-u(s,z)\|^2\, ds\\
\end{align*}
Collecting all the terms gives
\begin{align*}
&\mathbb{E}\|u(t\wedge\kappa_N,v)-u(t\wedge\kappa_N,z)\|^2 + 2\alpha\mathbb{E}\int_0^{t\wedge\kappa_N}\|u(s,v)-u(s,z)\|^2\, ds\\
\leq& \|v-z\|^2
 + \left(2\|K\|_{L(H)}\Lip(f)+C_B\right)\mathbb{E}\int_0^{t\wedge\kappa_N}\|u(s,v)-u(s,z)\|^2\, ds ,
\end{align*}
and thus
\begin{align*}
&\mathbb{E}\|u(t\wedge\kappa_N,v)-u(t\wedge\kappa_N,z)\|^2\\ 
\leq&  \|v-z\|^2
+ \left(\lambda - 2\alpha\right)\mathbb{E}\int_0^{t\wedge\kappa_N}\|u(s,v)-u(s,z)\|^2\, ds,
\end{align*}
where $\lambda= 2\|K\|_{L(H)}\Lip(f)+C_B$. Next, let $N\rightarrow\infty$, and note that by Gr\"onwall's lemma
\begin{equation}\label{eq:Feller}
\mathbb{E}\|u(t,v)-u(t,z)\|^2
\leq  \|v-z\|^2 e^{((\lambda - 2\alpha)\vee 0)t},\quad t\in [0,T].
\end{equation}
Now, to improve this, consider a comparison result from linear ordinary differential equations (ODEs). Let 
\begin{align*}
Y(t):=\mathbb{E}\|u(t,v)-u(t,z)\|^2.
\end{align*}
Then, $Y(t)$ is a sub-solution to the linear ordinary differential equation (linear ODE)
\begin{equation}\label{eq:ode}
\begin{cases}
&X' = (\lambda -2\alpha)X,\\
&X(0) = \|v-z\|^2.
\end{cases}
\end{equation}
By elementary ODE theory, \eqref{eq:ode} has the unique solution
\begin{align*}
X(t)=\|v-z\|^2 e^{(\lambda-2\alpha)t}.
\end{align*}
Since $Y(t)$ is a sub-solution, it follows that
\begin{align*}
Y(t) = \mathbb{E}\|u(t,v)-u(t,z)\|^2 \leq \|v-z\|^2 e^{(\lambda-2\alpha)t},
\end{align*}
and the estimate is independent of $t$.
Hence, under the assumption that $\lambda<2\alpha$,
\begin{align*}
&\mathbb{E}\|u(t,v)-u(t,z)\|^2 \leq \|v-z\|^2 e^{(\lambda-2\alpha)t}\longrightarrow 0\quad\text{as}\quad t\rightarrow\infty.
\end{align*}
Let $\mu$ be any invariant probability measure for $\{P_t\}_{t\ge 0}$. Then, for any $v\in H$, if $\mu$ has second moments, which is proved in Lemma \ref{lem:2nd} below, there exists $\hat{C}>0$ such that
\begin{equation}\label{eq:exp}
\begin{split}
&\left|P_t\varphi(v) - \int_H\varphi(z)\, \mu(dz) \right|\\
\leq& \int_H|P_t\varphi(v)-P_t\varphi(z)|\, \mu(dz)\\
\leq& \Lip(\varphi)\int_H\mathbb{E}\|u(t,v)-u(t,z)\|^2\, \mu(dz)\\
\leq&\Lip(\varphi)\int_H\|v-z\|^2\, \mu(dz) e^{-(2\alpha-\lambda)t}\\
\leq&2\Lip(\varphi)\left(\|v\|^2+\int_H\|z\|^2\, \mu(dz)\right) e^{-(2\alpha-\lambda)t}\\
\leq&2\Lip(\varphi)\left(\|v\|^2+\hat{C}\right) e^{-(2\alpha-\lambda)t}\longrightarrow 0\quad\text{as}\;\; t\rightarrow\infty.
\end{split}
\end{equation}
Thus
\begin{align*}
\lim_{t\rightarrow\infty}P_t\varphi(x) = \int_H\varphi(z)\, \mu(dz),\quad x\in H,\quad \varphi\in \Lip_b(H),
\end{align*}
and thus $\mu$ is unique, exponentially ergodic and exponentially mixing, see Definition \ref{def:ergo} and \cite{DPZ:96}.
\end{proof}

We have proved Theorem \ref{thm:main} (iii).

\subsection{The monotone case}\label{subsec:4.4}

In this subsection, we may assume that $w=\hat{w}$, i.e. $w$ is symmetric, and thus $K=\hat{K}$. We also assume that $\hat{w}$ is non-positive semidefinite,
that is,
\begin{equation}\label{eq:negdef}\langle \hat{K}u,u\rangle\le 0\quad\text{for every}\;\;u\in H.\end{equation}
This kind of condition is related to inhibition effects in the neural field, see \cite{Bressloff,neuralfields2014ch9}.
Assume also that $f:\R\to\R$ is non-decreasing, which is equivalent to
\begin{equation}\label{eq:mono}(f(t)-f(s))(t-s)\ge 0\quad\text{for every}\;\;t,s\in\R.\end{equation}
All the activation functions in Example \ref{ex:activation} satisfy this condition. It follows easily that then for any $u,v\in H$,
\begin{equation}\label{eq:mono2}\langle F(u)-F(v),u-v\rangle\ge 0.\end{equation}

In this case, we are able to improve the exponential convergence of the semigroup as follows. 
\begin{theorem}
Suppose that
Assumptions \ref{assu:space}, \ref{assu:noise}, \ref{assu:hat}, and \ref{assu:inv} hold.
Under the additional assumptions \eqref{eq:negdef} and \eqref{eq:mono}, and
\[\tilde{C}_B<2\alpha,\]
where $\tilde{C}_B$ is as in Assumption \ref{assu:inv},
there exists a unique exponentially ergodic invariant measure with second moments on $H_1$, and there exists a constant $C>0$, such that for any $v\in H_1$, and any $t\ge 0$,
\[\left|P_t\varphi(v) - \int_H\varphi(z)\, \mu(dz) \right|\le 2\Lip(\varphi)\left(\|v\|^2_1+C\right)e^{-(2\alpha-\tilde{C}_B)t},\]
for any $\varphi\in\Lip_b(H_1)$.
\end{theorem}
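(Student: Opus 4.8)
The plan is to transport the argument of Theorem~\ref{thm:uniqueness} and Lemma~\ref{lem:2nd} from $H$ to the nonlocal space $H_1$. The feature that yields the improved rate $2\alpha-\tilde C_B$ --- with no $\Lip(f)$-term --- is that, under the sign condition \eqref{eq:negdef} and the monotonicity \eqref{eq:mono}, the drift contribution of $K=\hat K$ is \emph{nonpositive in the nonlocal inner product $\langle\cdot,\cdot\rangle_1$}, so it simply drops out of the differential inequality for the squared $H_1$-distance of two solutions.

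I would first record existence. Since Assumptions~\ref{assu:space}, \ref{assu:noise}, \ref{assu:hat} and \ref{assu:inv} are in force, Proposition~\ref{prop:inv}, Proposition~\ref{prop:tight} and Theorem~\ref{thm:existence} apply and produce at least one invariant measure $\mu$; because the tightness in Proposition~\ref{prop:tight} is obtained over sets of the form $\{\gamma\|\cdot\|_1^2\le R\}$, the measure $\mu$ is concentrated on $H_1$. For the finiteness of the $\|\cdot\|_1$-second moment I would reprove Lemma~\ref{lem:2nd} with $\|\cdot\|^2$ replaced by $\|\cdot\|_1^2$, using Corollary~\ref{cor:inv} (whose coefficient $2\alpha-\beta$ is positive by Assumption~\ref{assu:inv}) in place of the It\^o estimate used there: truncating by $\wedge L$, integrating against $\mu$, invoking Fubini and invariance, and letting $L\to\infty$, this yields a finite constant $C\ge 0$ with $\int_{H_1}\|z\|_1^2\,\mu(dz)\le C$.

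The heart of the proof is the contraction estimate. Fix $v,z\in H_1$; by Proposition~\ref{prop:inv} the solutions $u(\cdot,v)$ and $u(\cdot,z)$ stay in $H_1$, so $w(t):=u(t,v)-u(t,z)$ solves, in $H_1$,
\[dw(t)=\bigl(-\alpha w(t)+\hat K[F(u(t,v))-F(u(t,z))]\bigr)\,dt+[B(u(t,v))-B(u(t,z))]\,dW(t),\qquad w(0)=v-z.\]
I would apply It\^o's formula to $w\mapsto\|w\|_1^2$, localize by $\kappa_N:=\inf\{t:\|w(t)\|_1^2>N\}\wedge T$, and take expectations. The stochastic integral is a mean-zero martingale and vanishes; the trace term is bounded, via the $H_1$-analogue of the Lipschitz control of $B$ from Assumption~\ref{assu:inv}, by $\tilde C_B\,\E\int_0^{t\wedge\kappa_N}\|w(s)\|_1^2\,ds$; and the $\hat K$-drift term is handled by the identity carrying the whole improvement: since \eqref{eq:negdef} forces $\pm\hat K=-\hat K$, one has $(\pm\hat K)^{-1/2}\hat K=-(\pm\hat K)^{1/2}$ and $(\pm\hat K)^{1/2}(\pm\hat K)^{-1/2}=\mathrm{Id}$ on $H_1$, so that for a.e.\ $s$
\[\bigl\langle \hat K[F(u(s,v))-F(u(s,z))],\,w(s)\bigr\rangle_1=-\bigl\langle F(u(s,v))-F(u(s,z)),\,w(s)\bigr\rangle\le 0\]
by \eqref{eq:mono2}. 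Collecting terms gives $\E\|w(t\wedge\kappa_N)\|_1^2\le\|v-z\|_1^2+(\tilde C_B-2\alpha)\,\E\int_0^{t\wedge\kappa_N}\|w(s)\|_1^2\,ds$; letting $N\to\infty$ and comparing $Y(t):=\E\|w(t)\|_1^2$ with the linear ODE $X'=(\tilde C_B-2\alpha)X$, $X(0)=\|v-z\|_1^2$ exactly as in Theorem~\ref{thm:uniqueness} yields $\E\|u(t,v)-u(t,z)\|_1^2\le\|v-z\|_1^2\,e^{-(2\alpha-\tilde C_B)t}$ for all $t\ge 0$.

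To conclude, I would repeat the computation \eqref{eq:exp} in $H_1$: for any invariant $\mu$ (concentrated on $H_1$ with $\int\|z\|_1^2\,d\mu\le C$), any $v\in H_1$ and $\varphi\in\Lip_b(H_1)$,
\[\Bigl|P_t\varphi(v)-\int_{H_1}\varphi(z)\,\mu(dz)\Bigr|\le\Lip(\varphi)\int_{H_1}\E\|u(t,v)-u(t,z)\|_1^2\,\mu(dz)\le 2\Lip(\varphi)\bigl(\|v\|_1^2+C\bigr)e^{-(2\alpha-\tilde C_B)t},\]
which forces uniqueness and delivers exponential ergodicity and exponential mixing of $\mu$ on $H_1$ with the advertised rate. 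I expect the main obstacle to be the rigorous justification of the displayed sign identity: verifying that $w(s)$ genuinely lies in $H_1$ (so that $\|\cdot\|_1$, the pseudoinverse $(\pm\hat K)^{-1/2}$, and the It\^o formula in $H_1$ are all legitimate), establishing the Moore--Penrose relations $(\pm\hat K)^{-1/2}\hat K=-(\pm\hat K)^{1/2}$ and $(\pm\hat K)^{1/2}(\pm\hat K)^{-1/2}=\mathrm{Id}$ on $H_1$, and confirming that the trace term is genuinely dominated by $\tilde C_B\|w(s)\|_1^2$ under Assumption~\ref{assu:inv}.
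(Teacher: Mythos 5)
Your proposal follows essentially the same route as the paper's proof: It\^o's formula for $\|\cdot\|_1^2$ applied to the difference of two solutions, the key sign identity $\langle\hat K h,w\rangle_1=-\langle h,w\rangle$ combined with the monotonicity \eqref{eq:mono2} to make the drift term nonpositive, the bound $\tilde C_B\E\int\|w(s)\|_1^2\,ds$ on the trace term, and then the ODE comparison and integration against $\mu$ exactly as in Theorem \ref{thm:uniqueness}, with the $H_1$-second moments obtained from Corollary \ref{cor:inv} by the argument of Lemma \ref{lem:2nd}. The technical points you flag at the end (invariance of $H_1$ under the flow and the legitimacy of the pseudoinverse identities) are precisely what the paper relies on via Proposition \ref{prop:inv} and the spectral construction of $H_1$, so no genuine gap remains.
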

\begin{proof}
The existence of an invariant measure on $H$ follows from Theorem \ref{thm:existence}, note that $H_1\subset H$.
The proof of Theorem \ref{thm:uniqueness} can be repeated verbatim, noting that the non-positive definiteness and the monotonicity can be employed. Let $u,v\in H_1$.
We need to bound the term $\mathbb{E}\|u(t,v)-u(t,z)\|^2_1$, so let $\{\kappa_N\}$ be a sequence of stopping times given by
\begin{align*}
\kappa_N:=\inf\{t\in[0,T]: \|u(t,v)-u(t,z)\|^2_1>N\}\wedge T,\quad N\in\N,
\end{align*}
and apply It\^o's formula to obtain, for $t\wedge\kappa_N$,
\begin{align*}
&\mathbb{E}\|u(t\wedge\kappa_N,v)-u(t\wedge\kappa_N,z)\|^2_1 + 2\alpha\mathbb{E}\int_0^{t\wedge\kappa_N}\|u(s,v)-u(s,z)\|^2_1\, ds \\
=& \|v-z\|^2_1
 + 2\mathbb{E}\int_0^{t\wedge\kappa_N}\langle KF(u(s,v))-KF(u(s,z)),u(s,v)-u(s,z) \rangle_1\, ds\\
& + 2\mathbb{E}\int_0^{t\wedge\kappa_N}\langle u(s,v)-u(s,z),[B(u(s,v))-B^*(u(s,z))]\, dW(s) \rangle_1\\
& + \mathbb{E}\int_0^{t\wedge\kappa_N}\operatorname{Tr}_{H_1} [B(u(s,v))-B(u(s,z)),B^*(u(s,v))-B^*(u(s,z))]\, ds.
\end{align*}
Since 
\begin{align*}
M:t\mapsto \int_0^t\langle u(s,v)-u(s,z),[B(u(s,v))-B^*(u(s,z))]\, dW(s) \rangle_1
\end{align*}
is a local martingale, we have that
\begin{align*}
\mathbb{E}\left[M_{t\wedge\kappa_N}\right] = \mathbb{E}\left[M_0\right] = 0. 
\end{align*}
Furthermore, since $K=\hat{K}$, by non-positive definiteness \eqref{eq:negdef} and monotonicity \eqref{eq:mono2},
\begin{align*}
&2\mathbb{E}\int_0^{t\wedge\kappa_N}\langle KF(u(s,v))-KF(u(s,z)),u(s,v)-u(s,z) \rangle_1\, ds\\
=& 2\mathbb{E}\int_0^{t\wedge\kappa_N}\langle (-K^{-1}) (KF(u(s,v))-KF(u(s,z))),u(s,v)-u(s,z) \rangle\, ds\\
=& -2\mathbb{E}\int_0^{t\wedge\kappa_N}\langle F(u(s,v))-F(u(s,z))),u(s,v)-u(s,z) \rangle\, ds\\
\le& 0
\end{align*}
Furthermore,
\begin{align*}
&\mathbb{E}\int_0^{t\wedge\kappa_N}\operatorname{Tr}_{H_1}[B(u(s,v))-B(u(s,z)),B^*(u(s,v))-B^*(u(s,z))]\, ds\\
\leq& \mathbb{E}\int_0^{t\wedge\kappa_N}\|B(u(s,v))-B(u(s,z))\|^2_{L_2(V,H_1)}\, ds\\
\leq& \tilde{C}_B\mathbb{E}\int_0^{t\wedge\kappa_N}\|u(s,v)-u(s,z)\|^2_1\, ds
\end{align*}
Collecting all the terms and letting $N\to\infty$, gives by Fatou's lemma,
\begin{align*}
&\mathbb{E}\|u(t,v)-u(t,z)\|^2_1 + (2\alpha-\tilde{C}_B)\mathbb{E}\int_0^{t}\|u(s,v)-u(s,z)\|^2_1\, ds\\
\leq& \|v-z\|^2_1,
\end{align*}
and thus the proof can be completed with the same arguments as in proof of Theorem \ref{thm:uniqueness},
noting that one can infer the existence of second moments in $H_1$ with the help of Corollary \ref{cor:inv} exactly by the arguments of the proof of Lemma \ref{lem:2nd}.
\end{proof}

\section*{Declarations}
\noindent
\textbf{Acknowledgments.} 
JMT would like to thank Eva L\"ocherbach (Universit\'e Paris 1 Panth\'eon-Sorbonne) for fruitful discussions. Both authors would like to thank Christian Kuehn (Technische Universit\"at M\"unchen) for his interest and inspiring comments.

\noindent
\textbf{Funding.}
The research of both authors was partially supported by the European Union's Horizon Europe research and innovation programme under the Marie Sk\l{}odowska-Curie Actions Staff Exchanges (Grant agreement no.~101183168 -- LiBERA, Call: HORIZON-MSCA-2023-SE-01).

\noindent
\textbf{Disclaimer.}
Funded by the European Union. Views and opinions expressed are however those of the authors only and do not necessarily reflect those of
the European Union or the European Education and Culture Executive Agency (EACEA). Neither the European Union nor EACEA can be held responsible for them.

\noindent
\textbf{Competing interests.} The authors declare that they have no conflict of interest.

\noindent
\textbf{Availability of data and materials.} Data sharing not applicable to this article as no data-sets were generated or analyzed during the current study.

\end{document}